\newcommand{\calC}{\mathcal C}
\newcommand{\calP}{\mathcal P}
\newcommand{\calS}{\mathcal S}
\newcommand{\calZ}{\mathcal Z}
\newcommand{\calW}{\mathcal W}
\newcommand{\R}{\mathbb R}
\newcommand{\N}{\mathbb N}
\newcommand{\Z}{\mathbb Z}
\newcommand{\be}{\begin{equation}}
\newcommand{\ee}{\end{equation}}
\newcommand{\citep}[1]{\cite{#1}}
\DeclareMathOperator{\vol}{vol}
\DeclareMathOperator{\std}{std}
\DeclareMathOperator{\cs}{cs}
\DeclareMathOperator{\des}{des}
\DeclareMathOperator{\pos}{pos}
\newcommand{\ind}[1]{\mathbf{1}_{#1}}
\newtheorem{thm}{Theorem}[section]
\newtheorem{lem}[thm]{Lemma}
\newtheorem{prop}[thm]{Proposition}
\theoremstyle{definition}
\newtheorem{defn}[thm]{Definition}
\newtheorem{rem}[thm]{Remark}
\newtheorem{ex}[thm]{Example}
\title[Extensions of partial cyclic orders and polytopes]{Extensions of partial cyclic orders and consecutive coordinate polytopes}
\author[A. Ayyer]{Arvind Ayyer}
\address{Arvind Ayyer,
Department of Mathematics, 
Indian Institute of Science,
Bangalore - 560012, India}
\email{arvind@iisc.ac.in}
\author[M. Josuat-Verg\`es]{Matthieu Josuat-Verg\`es}
\address{Matthieu Josuat-Verg\`es, Laboratoire d'Informatique Gaspard Monge, CNRS and Universit\'e Paris-Est Marne-la-Vall\'ee, France}
\email{matthieu.josuat-verges@u-pem.fr}
\author[S. Ramassamy]{Sanjay Ramassamy}
\address{Sanjay Ramassamy,
Unit\'e de Math\'ematiques Pures et Appliqu\'ees,
\'Eco\-le normale sup\'erieure de Lyon,
46 all\'ee d'Italie,
69364 Lyon Cedex 07, France}
\email{sanjay.ramassamy@ens.fr}
\date{\today}  
\begin{document} 

\begin{abstract}
We introduce several classes of polytopes contained in $[0,1]^n$ and cut out by inequalities involving sums of consecutive coordinates. We show that the normalized volumes of these polytopes enumerate circular extensions of certain partial cyclic orders. Among other things this gives a new point of view on a question popularized by Stanley. We also provide a combinatorial interpretation of the Ehrhart $h^*$-polynomials of some of these polytopes in terms of descents of total cyclic orders. The Euler numbers, the Eulerian numbers and the Narayana numbers appear as special cases.

\end{abstract}

\subjclass[2010]{05A05, 05A10, 06A75, 52B11, 52B20}
\keywords{Partial cyclic orders, circular extensions, lattice polytopes, Ehrhart polynomials, Narayana numbers, Euler numbers, Eulerian numbers}

\maketitle

\section{Introduction}
\label{sec:introduction}

Lattice polytopes, i.e. polytopes with vertices in $\Z^n$, have a volume which is an integer multiple of $1/n!$, which is the volume of the smallest simplex with vertices in $\Z^n$. An important question is to find a combinatorial interpretation of the integers arising as the normalized volume (the volume multiplied by factorial of the dimension) of some natural classes of lattice polytopes. The most celebrated instance is probably the Chan-Robbins-Yuen polytope~\cite{CRY00}, the normalized volume of which was conjectured by~\cite{CRY00} and shown by Zeilberger~\cite{Z99} to be equal to a product of Catalan numbers. This was later generalized to flow polytopes, see for example~\cite{CKM17} and the references therein. Another class of polytopes is that of the poset polytopes~\cite{S86}: to any poset one can associate two polytopes, the order polytope and the chain polytope of the poset, whose normalized volumes are equal to the number of linear extensions of the poset.

Refined enumeration results involve the Ehrhart $h^*$-polynomial of the polytope, which has the property that its coefficients are non-negative integers which sum to the normalized volume of the polytope~\cite{S80}. See~\cite{BR15} for some background about Ehrhart theory.

In this article, we associate natural polytopes to partial cyclic orders in the spirit of the chain polytopes construction~\cite{S86}. We define several classes of polytopes, obtained as subsets of $[0,1]^n$ and cut out by inequalities comparing the sum of some consecutive coordinates to the value $1$. Stanley asked for a formula of the normalized volumes of some of these polytopes in~\cite[Exercise~4.56(d)]{S12}. We show that the normalized volumes of these polytopes enumerate extensions of some partial cyclic orders to total cyclic orders (see below for some background on cyclic orders). We also find a combinatorial interpretation of the Ehrhart $h^*$-polynomials of some of these polytopes in terms of descents in the total cyclic orders. Remarkably enough, the Euler up/down numbers and the Eulerian numbers both arise, the former as the volumes of some polytopes and the latter as the coefficients of the $h^*$-polynomials of other polytopes. The Catalan and Narayana numbers also arise, as limiting values for the volumes and coefficients of the $h^*$-polynomials of a certain class of polytopes. Some of the polytopes we introduce belong to the class of Gorenstein polytopes (see e.g. \cite{BN08}).

A {\em cyclic order} $\pi$ on a set $X$ is a subset of triples in $X^3$ satisfying the following three conditions, respectively called cyclicity, asymmetry and transitivity:
\begin{enumerate}
  \item $\forall x, y, z \in X$, $(x,y,z) \in \pi \Rightarrow (y,z,x) \in \pi$;
  \item $\forall x ,y,z \in X$, $(x,y,z) \in \pi \Rightarrow (z,y,x) \not \in \pi$;
  \item $\forall x,y,z,u \in X$, $(x,y,z) \in \pi$ and $(x,z,u) \in \pi \Rightarrow (x,y,u) \in \pi$.
\end{enumerate}
A cyclic order $\pi$ is called {\em total} if for every triple of distinct elements $(x,y,z) \in X^3$, either $(x,y,z)\in \pi$ or $(z,y,x) \in \pi$. Otherwise, it is called {\em partial}.
Intuitively a total cyclic order $\pi$ on $X$ is a way of placing all the elements of $X$ on a circle such that a triple $(x,y,z)$ lies in $\pi$ whenever $y$ lies on the cyclic interval from $x$ to $z$ when turning around the circle in the clockwise direction. This provides a bijection between total cyclic orders on $X$ and cyclic permutations on $X$. See Figure~\ref{fig:cyclicorder7} for an example. This graphical representation is more intricate in the case of a partial cyclic order $\pi$, where there are usually multiple ``circles'' and each element may lie on several circles, as dictated by the triples belonging to $\pi$. Given a partial cyclic order $\pi'$ on $X$ and a total cyclic order $\pi$ on $X$, $\pi$ is called a \emph{circular extension} of $\pi'$ if $\pi' \subset \pi$. In other words, a circular extension of a partial cyclic order is a total cyclic order compatible with it.

In this article, we consider classes of total cyclic orders on $\{0,\ldots,n\}$ where we prescribe the relative position on the circle of certain consecutive integers. This amounts to looking at the set of all the circular extensions of a given partial cyclic order. Although the set of total cyclic orders on $\{0,\ldots,n\}$ is naturally in bijection with the set of permutations on $\{1,\ldots,n\}$, the conditions defining the subsets under consideration are expressed more naturally in terms of circular extensions. Not every partial cyclic order admits a circular extension, as was shown by Megiddo~\cite{M76}. The classes of partial cyclic orders considered in this article build upon those introduced in~\cite{R17}, which are the first classes for which positive enumerative results of circular extensions were obtained.

\begin{figure}[htbp]
\includegraphics[width=1.5in]{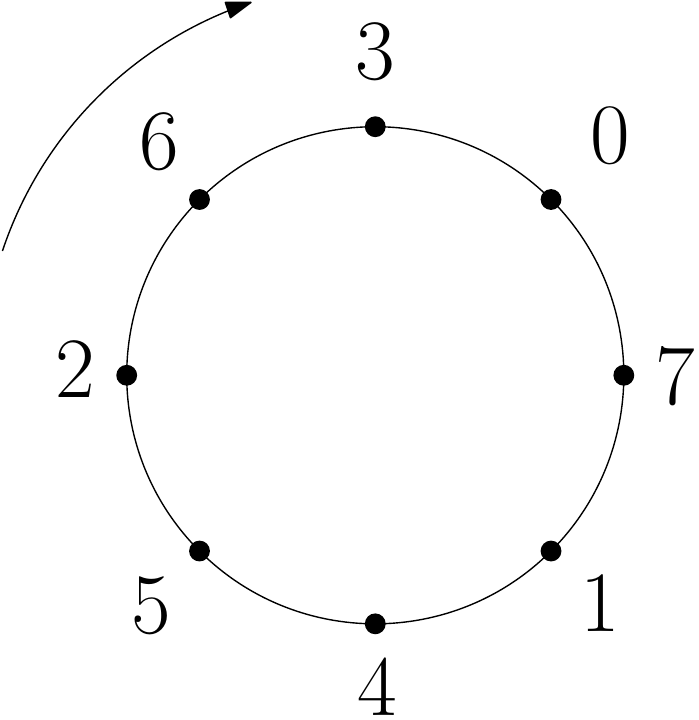}
\caption{An example of a graphical representation of a total cyclic order on $\{0,\ldots,7\}$. The arrow indicates the clockwise direction of rotation on the circle. This total cyclic order contains for example the triples $(0,4,2)$ and $(6,1,2)$ but not the triple $(7,5,4)$.}
\label{fig:cyclicorder7}
\end{figure}

\subsection*{Organization of the paper}

In Section~\ref{sec:mainresults}, we introduce several classes of polytopes and partial cyclic orders, then state the main results relating the volumes and Ehrhart $h^*$-polynomials of the former to the enumeration and refined enumeration of the circular extensions of the latter. In Section~\ref{sec:lattice} we prove that our polytopes are lattice polytopes. In Section~\ref{sec:transfermap} we introduce a transfer map which maps the original polytopes to sets whose volume can be computed easily, from which we deduce the statement about their volumes. Section~\ref{sec:Ehrhart} is mainly devoted to the interpretation of the coefficients of the Ehrhart $h^*$-polynomials of some of the polytopes. We also show in that section that some of these polynomials are palindromic. In Section~\ref{sec:Narayana} we show that a certain class of these polynomials stabilizes to the Narayana polynomials. Finally in Section~\ref{sec:boustrophedon} we explain how to use the multidimensional boustrophedon construction to compute the volumes of the polytopes.

\section{Main results}
\label{sec:mainresults}

\subsection{Volumes of polytopes}

For $n\geq1$, we denote by $[n]$ the set $\left\{0,1,\ldots,n\right\}$ and by $\calZ_n$ the set of total cyclic orders on $[n]$. If $m\geq3$ and $\pi\in\calZ_n$, the $m$-tuple $(x_1,\ldots,x_m)$ of distinct elements of $[n]$ is called a \emph{$\pi$-chain} if for $2\leq i \leq m-1$, we have $(x_1,x_i,x_{i+1})\in \pi$. In words, this means that if we place all the numbers on a circle in the cyclic order prescribed by $\pi$ and turn around the circle in the clockwise direction starting at $x_1$, we will first see $x_2$, then $x_3$, etc, before coming back to $x_1$. We extend this definition to the case $m=2$ by declaring that every pair $(x_1,x_2)\in [n]^2$ with $x_1\neq x_2$ forms a $\pi$-chain. For example, for the total cyclic order $\pi$ depicted on Figure~\ref{fig:cyclicorder7}, $(0,1,2,3)$ and $(1,5,6,3,7)$ are $\pi$-chains but $(1,2,3,4)$ is not a $\pi$-chain.

For $1\leq k \leq n$, define $\hat{A}_{k,n}$ to be the set of total cyclic orders $\pi\in\calZ_n$ such that for $0\leq i \leq n-k$, the $(k+1)$-tuple $(i,i+1,\ldots,i+k)$ forms a $\pi$-chain. This can easily be reformulated by saying that $\hat{A}_{k,n}$ is the set of all circular extensions of some given partial cyclic order.

For $1\leq k\leq n$, define the convex polytope $\hat{B}_{k,n}$ as the set of all $(x_1,\ldots,x_n)\in[0,1]^n$ such that
\[
\text{for } 0 \leq i \leq n-k, \text{ we have } x_{i+1} + \cdots + x_{i+k}\leq 1.
\]
These polytopes were introduced in~\cite[Exercise~4.56(c)]{S12} by Stanley. In the solution to this exercise, he gives some discrete difference equations for polynomials which can be used to compute the volumes of $\hat{B}_{k,n}$. He then asks for a formula for the normalized volumes of $\hat{B}_{k,n}$~\cite{stanley-2012}, \cite[Exercise 4.56(d)]{S12}. The polytopes $\hat{B}_{2,n}$ (resp. $\hat{B}_{3,n}$) seem to have been first considered in~\cite{SMN79} (resp. by Ki-rillov~\cite[Sequence A096402]{OEIS}). The polytopes $\hat{B}_{2,n}$ were also extensively studied by Diaconis and Wood~\cite{DW13}, arising as spaces of random doubly stochastic tridiagonal matrices.

Our first result relates the normalized volumes of $\hat{B}_{k,n}$ to the enumeration of $\hat{A}_{k,n}$.

\begin{thm}
\label{thm:Stanleyvolumes}
For $1\leq k\leq n$, the polytope $\hat{B}_{k,n}$ is a lattice polytope and we have
\begin{equation}
n! \vol(\hat{B}_{k,n}) = \# \hat{A}_{k,n}.
\end{equation}
\end{thm}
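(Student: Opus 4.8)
The plan is to establish a volume-preserving (up to sign) piecewise-linear bijection between $\hat{B}_{k,n}$ and a disjoint union of simplices indexed by the elements of $\hat{A}_{k,n}$, each simplex having normalized volume $1$. The first task, that $\hat{B}_{k,n}$ is a lattice polytope, is deferred to Section~\ref{sec:lattice}; so here I focus on the volume identity, which by the announced structure of the paper will follow from the transfer map of Section~\ref{sec:transfermap}. Concretely, I would partition the open cube $(0,1)^n$ (a full-measure subset of $[0,1]^n$) according to the relative cyclic order of the partial sums. The natural coordinates to introduce are the ``running sums modulo $1$'': given $(x_1,\dots,x_n)\in(0,1)^n$, set $s_0=0$ and $s_i = x_1+\cdots+x_i \pmod 1$ for $1\le i\le n$, so that $(s_0,s_1,\dots,s_n)$ is a tuple of $n+1$ points on the circle $\R/\Z$. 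Generically these are distinct and hence determine a total cyclic order $\pi\in\calZ_n$ on $[n]=\{0,1,\dots,n\}$.

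The key claim is then: a generic point $(x_1,\dots,x_n)\in(0,1)^n$ lies in $\hat B_{k,n}$ if and only if the associated cyclic order $\pi$ lies in $\hat A_{k,n}$. This is a local statement about the window of $k$ consecutive coordinates: the inequality $x_{i+1}+\cdots+x_{i+k}\le 1$ is equivalent to saying that, starting from $s_i$ and moving clockwise on $\R/\Z$ by the amounts $x_{i+1}, x_{i+1}+x_{i+2}, \dots$, one reaches $s_{i+k}$ after total displacement $<1$, i.e. one does not wrap around; and ``not wrapping around'' is exactly the assertion that $(i, i+1, \dots, i+k)$ is a $\pi$-chain. Conversely the chain condition forces each partial sum $x_{i+1}+\cdots+x_{i+j}$ ($j\le k$) to be $<1$, and in particular the full window sum is $\le 1$. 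Summing over $i$ gives the equivalence. I would then check that for a fixed target cyclic order $\pi\in\hat A_{k,n}$, the set of $(x_1,\dots,x_n)$ mapping to $\pi$ is (the interior of) a simplex: once the cyclic positions of $s_0,\dots,s_n$ are fixed, the $x_i$ are determined by the gaps between consecutive points on the circle up to a single free rotation parameter, and the normalization $s_0=0$ with the constraint $x_i\in(0,1)$ pins it down to a standard simplex of volume $1/n!$. Hence $\vol(\hat B_{k,n}) = \#\hat A_{k,n} \cdot \tfrac1{n!}$, which is the claim.

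The main obstacle, and the place where care is genuinely required, is verifying that the transfer map $(x_1,\dots,x_n)\mapsto(s_1,\dots,s_n)$ (or whichever explicit form the paper adopts in Section~\ref{sec:transfermap}) is measure-preserving and that the fibers are exactly unit simplices --- in particular, handling the ``wrap-around'' bookkeeping so that each cell of the cube maps bijectively, with Jacobian $\pm 1$, onto a full-dimensional piece of the target. One must also confirm that the boundary strata (where some $s_i$ coincide, or where some window sum equals exactly $1$) have measure zero and so do not affect the volume computation, and that no cyclic order in $\hat A_{k,n}$ is missed or double-counted. Once the transfer map is set up correctly this is essentially bookkeeping, but it is the technical heart of the argument; I would structure the proof so that Section~\ref{sec:lattice} supplies the lattice-polytope assertion and Section~\ref{sec:transfermap} supplies the volume identity via this piecewise-linear change of variables.
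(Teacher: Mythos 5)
Your proposal is correct and follows essentially the same route as the paper: the paper's Section~\ref{sec:transfermap} uses exactly this transfer map $(x_1,\dots,x_n)\mapsto(\sum_{j\le i}x_j \bmod 1)_i$, shows it is a piecewise-linear measure-preserving bijection off a null set, proves the same equivalence between the window inequalities and the $\pi$-chain conditions, and identifies each fiber as an order simplex $S_\pi$ of volume $1/n!$ (Lemmas~\ref{lem:measurepreserving} and~\ref{lem:simplexvolume} and Proposition~\ref{prop:transfer}), while the lattice-polytope claim is handled separately via totally unimodular interval matrices in Section~\ref{sec:lattice}. The only difference is cosmetic: the paper proves the more general Theorem~\ref{thm:consecutivesums} for arbitrary $I\subset P_n$ and obtains Theorem~\ref{thm:Stanleyvolumes} as the special case $I=\{(i,i+k)\}_{0\le i\le n-k}$.
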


\begin{rem}
\label{rem:k23}
The cases $k=1$ and $k=2$ are already known. When $k=1$, $\hat{A}_{1,n}=\calZ_n$, which has cardinality $n!$ and $\hat{B}_{1,n}=[0,1]^n$, which has volume $1$. When $k=2$, it was shown in~\cite{R17} that $\# \hat{A}_{2,n}$ is equal to the $n$th \emph{Euler up/down number} $E_n$. On the other hand, it follows from~\cite[Exercise 4.56(c)]{S12} that $n! \vol(\hat{B}_{2,n}) = E_n$ (see also~\cite{SMN79}).
\end{rem}

Theorem~\ref{thm:Stanleyvolumes} admits a generalization where the lengths of the chains defining the partial cyclic order (resp. the number of coordinates appearing in each inequality defining the polytope) do not have to be all equal. For $n\geq1$, let $P_n$ be the set of all pairs $(i,j)\in [n]^2$ such that $i<j$. To every subset $I\subset P_n$, we associate the set $A_{I,n}$ of all the total cyclic orders $\pi \in\calZ_n$ such that for $(i,j)\in I$, $(i,i+1,\ldots,j)$ forms a chain in $\pi$. The set $A_{I,n}$ can be seen as the set of all the circular extensions of a given partial cyclic order. Furthermore, to every subset $I\subset P_n$, we associate the polytope $B_{I,n}$ defined as the set of all $(x_1,\ldots,x_{n})\in[0,1]^{n}$ such that
\[
\text{for } (i,j)\in I, \text{ we have } x_{i+1} + \cdots + x_{j-1} + x_j\leq 1.
\]
Then we have:

\begin{thm}
\label{thm:consecutivesums}
For $n\geq1$ and $I\subset P_{n}$, the polytope $B_{I,n}$ is a lattice polytope and we have
\begin{equation}
n! \vol(B_{I,n}) = \# A_{I,n}.
\end{equation}
\end{thm}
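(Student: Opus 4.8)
The plan is to build a bijection, or rather a measure-preserving correspondence, between the polytope $B_{I,n}$ and a disjoint union of simplices indexed by the total cyclic orders in $A_{I,n}$, so that each simplex contributes exactly $1/n!$ to the volume. First I would set up the standard dissection of the cube $[0,1]^n$ into $n!$ simplices indexed by permutations $\sigma$ of $\{1,\dots,n\}$, where the simplex $\Delta_\sigma$ consists of points with $x_{\sigma(1)} \le x_{\sigma(2)} \le \cdots \le x_{\sigma(n)}$; each $\Delta_\sigma$ has volume $1/n!$. The key observation is that a point in the interior of $\Delta_\sigma$ lies in $B_{I,n}$ in a way that depends only on $\sigma$, but this is not literally true — the inequalities $x_{i+1}+\cdots+x_j \le 1$ are not constant on each $\Delta_\sigma$. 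So instead I would use the \emph{transfer map} promised in Section~\ref{sec:transfermap} of the paper (which I am allowed to assume), whose whole purpose is precisely to straighten out these consecutive-sum inequalities into coordinatewise inequalities; after applying it, $B_{I,n}$ becomes a union of unit-volume-fraction cells in bijection with the combinatorial objects counted by $A_{I,n}$.

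Concretely, the second step is to reduce Theorem~\ref{thm:consecutivesums} to the structure already established for $\hat B_{k,n}$ and $\hat A_{k,n}$ in Theorem~\ref{thm:Stanleyvolumes}, or rather to observe that the proof of the latter never used that all the chain-lengths were equal to $k$: the transfer map and the counting argument go through verbatim with an arbitrary set $I \subset P_n$ of intervals $(i,j)$ in place of the sliding windows $(i,i+1,\dots,i+k)$. Thus I would phrase the argument once, for a general $I$, thereby proving Theorems~\ref{thm:Stanleyvolumes} and~\ref{thm:consecutivesums} simultaneously: the lattice-polytope claim follows from Section~\ref{sec:lattice} applied to $B_{I,n}$, and the volume identity follows from Section~\ref{sec:transfermap}. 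The combinatorial heart is to check that, under the transfer map, the condition ``$(i,i+1,\dots,j)$ is a $\pi$-chain'' on the cyclic-order side corresponds exactly to the defining inequality $x_{i+1}+\cdots+x_j \le 1$ (together with the box constraints) on the polytope side, so that the cells of the dissection of $B_{I,n}$ are indexed precisely by the elements of $A_{I,n}$.

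To make the correspondence explicit: a generic point $(x_1,\dots,x_n) \in [0,1]^n$ determines, via partial sums reduced modulo $1$, a placement of the $n+1$ points $0 = s_0 < $ (positions of $s_0, s_0+x_1, s_0+x_1+x_2, \dots$ read cyclically on $\R/\Z$) on a circle, hence a total cyclic order $\pi$ on $[n] = \{0,1,\dots,n\}$; the fractional parts being generically distinct, this is well-defined off a measure-zero set. One then verifies that $x_{i+1}+\cdots+x_j \le 1$ holds if and only if, in going clockwise from the point labelled $i$, one encounters $i+1, i+2, \dots, j$ before wrapping past $i$ again — that is, if and only if $(i,i+1,\dots,j)$ is a $\pi$-chain. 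Integrating the indicator and using that each fibre of this map (one per cyclic order in $A_{I,n}$) is a simplex of volume $1/n!$ gives $n!\vol(B_{I,n}) = \# A_{I,n}$.

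The main obstacle, and the place where I expect the real work to live, is proving that the transfer map is volume-preserving and that its image is the claimed union of simplices — i.e. checking that the map from $B_{I,n}$ to the simplicial complex is a bijection almost everywhere with unit Jacobian, and that the boundary identifications do not cause over- or under-counting. This is exactly the content the paper defers to Section~\ref{sec:transfermap}, so within the present proof I would cite it; the only genuinely new point relative to Theorem~\ref{thm:Stanleyvolumes} is the bookkeeping that replaces the family of equal-length windows by an arbitrary interval set $I$, which is routine once the $k$-uniform case is in hand. $\blacksquare$
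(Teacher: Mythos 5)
Your proposal is correct and follows essentially the same route as the paper: the partial-sums-mod-$1$ transfer map $F_n$, the dissection of $[0,1)^n$ into cells $S_\pi$ of volume $1/n!$ indexed by total cyclic orders, and the equivalence of $x_{i+1}+\cdots+x_j<1$ with $(i,i+1,\ldots,j)$ being a $\pi$-chain are exactly the content of Lemmas~\ref{lem:measurepreserving} and~\ref{lem:simplexvolume} and Proposition~\ref{prop:transfer}, with the lattice-polytope claim handled by total unimodularity as in Section~\ref{sec:lattice}. The paper likewise proves the general-$I$ statement directly and deduces Theorem~\ref{thm:Stanleyvolumes} as the special case $I=\{(i,i+k)\}$, just as you suggest.
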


If $I=\left\{(i,i+k)\right\}_{0\leq i \leq n-k}$, we recover $A_{I,n}=\hat{A}_{k,n}$ and $B_{I,n}=\hat{B}_{k,n}$. Hence Theorem~\ref{thm:Stanleyvolumes} follows as a corollary of Theorem~\ref{thm:consecutivesums}.

\begin{rem}
If some pair $(i,j)\in I$ is nested inside another pair $(i',j')\in I$, then the condition on $\pi$-chains imposed by $(i,j)$ (resp. the inequality imposed by $(i,j)$) is redundant in the definition of $A_{I,n}$ (resp. $B_{I,n}$). Without loss of generality, we can thus restrict ourselves to considering sets $I$ with no nested pairs, which provides a minimal way of describing $A_{I,n}$ and $B_{I,n}$.
\end{rem}

The case $k=2$ of Theorem~\ref{thm:Stanleyvolumes} can be generalized in the following way. To every word $\underline{s}=(s_1,\ldots,s_n)\in\left\{+,-\right\}^n$ with $n\geq0$, following~\cite{R17}, one can associate the \emph{cyclic descent class} $\widetilde{A}_{\underline{s}}$, defined as the set of all $\pi\in\calZ_{n+1}$ such that for $1 \leq i\leq n$, we have $(i-1,i,i+1)\in \pi$ (resp. $(i+1,i,i-1)\in \pi$) if $s_i=+$ (resp. if $s_i=-$). The set $\widetilde{A}_{\underline{s}}$ can again be seen as the set of all the circular extensions of a given partial cyclic order. For example, if $s_i=+$ for $1 \leq i \leq n$, then $\widetilde{A}_{\underline{s}}=\hat{A}_{2,n+1}$. On the other hand, one can associate to every word $\underline{s}=(s_1,\ldots,s_n)\in\left\{+,-\right\}^n$ the polytope $\widetilde{B}_{\underline{s}}$ defined as the set of all $(x_1,\ldots,x_{n+1})\in[0,1]^{n+1}$ such that:
\begin{itemize}
 \item if $1 \leq i \leq n$ and $s_i=+$, then we have $x_i+x_{i+1}\leq 1$;
 \item if $1 \leq i \leq n$ and $s_i=-$, then we have $x_i+x_{i+1}\geq 1$.
\end{itemize}
For example, if $s_i=+$ for $1 \leq i \leq n$, then $\widetilde{B}_{\underline{s}}=\hat{B}_{2,n+1}$. We then have the following result.

\begin{thm}
\label{thm:cyclicdescents}
For $n\geq0$ and $\underline{s}=(s_1,\ldots,s_n)\in\left\{+,-\right\}^n$, the polytope $\widetilde{B}_{\underline{s}}$ is a lattice polytope and we have
\begin{equation}
(n+1)! \vol(\widetilde{B}_{\underline{s}}) = \# \widetilde{A}_{\underline{s}}.
\end{equation}
\end{thm}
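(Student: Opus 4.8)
The plan is to prove Theorem~\ref{thm:cyclicdescents} along the same lines as Theorem~\ref{thm:consecutivesums}: first check that $\widetilde B_{\underline s}$ is a lattice polytope, then build a measure-preserving transfer map that carries $\widetilde B_{\underline s}$ onto a disjoint union of open simplices of volume $1/(n+1)!$ indexed by $\widetilde A_{\underline s}$.

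For the lattice property, write the constraints of $\widetilde B_{\underline s}$ as $Mx\le b$, where the rows of $M$ are $\pm e_i$ (from $0\le x_i\le 1$) and $\pm(e_i+e_{i+1})$ (from $x_i+x_{i+1}\le 1$ or $x_i+x_{i+1}\ge1$). Each row has its nonzero entries, all $\pm1$, in consecutive columns; such a matrix is an interval matrix up to row negations, hence totally unimodular, and $b\in\Z^{n+1}$, so every vertex of $\widetilde B_{\underline s}$ is integral. (Alternatively this is a minor variant of the face analysis of Section~\ref{sec:lattice}.)

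For the volume, I would use the transfer map $\Phi\colon[0,1]^{n+1}\to(\R/\Z)^{n+1}$, $\Phi(x_1,\dots,x_{n+1})=(\sigma_1,\dots,\sigma_{n+1})$ with $\sigma_j\equiv x_1+\cdots+x_j\pmod1$ and $\sigma_0:=0$. Since $x\mapsto(x_1+\cdots+x_j)_j$ is unimodular and $\Phi$ is, off measure-zero sets, a bijection onto $(\R/\Z)^{n+1}$ (recover $x_1$ as the representative of $\sigma_1$ in $[0,1)$, then inductively $x_j$ as the unique number in $[0,1)$ with $x_1+\cdots+x_j\equiv\sigma_j$), $\Phi$ is measure-preserving. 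Regard $\sigma_0,\dots,\sigma_{n+1}$ as $n+2$ points on the circle $\R/\Z$ labelled $0,1,\dots,n+1$; generically they are distinct and define $\pi_x\in\calZ_{n+1}$. The key local computation, obtained by placing $\sigma_{i-1}$ at the origin so that $\sigma_i=x_i$ and $\sigma_{i+1}\equiv x_i+x_{i+1}$, is that for $1\le i\le n$ one has $x_i+x_{i+1}<1\iff(i-1,i,i+1)\in\pi_x$ and $x_i+x_{i+1}>1\iff(i+1,i,i-1)\in\pi_x$. Together with the asymmetry of $\pi_x$ and $x\in[0,1]^{n+1}$, this yields, up to measure zero, $x\in\widetilde B_{\underline s}\iff\pi_x\in\widetilde A_{\underline s}$ (a sign $+$ pairs $x_i+x_{i+1}\le 1$ with $(i-1,i,i+1)$, a sign $-$ pairs $x_i+x_{i+1}\ge 1$ with the reversed triple). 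Hence $\Phi$ maps the generic part of $\widetilde B_{\underline s}$ onto $\bigsqcup_{\pi\in\widetilde A_{\underline s}}C_\pi$, where $C_\pi$ is the set of configurations inducing $\pi$; fixing $\pi$ is the same as fixing the clockwise order of $\sigma_1,\dots,\sigma_{n+1}$ relative to $\sigma_0=0$, so $C_\pi=\{\,0<\sigma_{\rho(1)}<\cdots<\sigma_{\rho(n+1)}<1\,\}$ for a permutation $\rho=\rho(\pi)$, an open simplex of volume $1/(n+1)!$. Summing, $\vol(\widetilde B_{\underline s})=\#\widetilde A_{\underline s}/(n+1)!$.

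The main obstacle is making the transfer map rigorous: controlling the measure-zero locus where two $\sigma_j$ coincide or vanish, and --- the substantive point --- verifying that the orientations of the $n$ overlapping triples $(i-1,i,i+1)$ are all read off correctly and simultaneously from the single configuration $(\sigma_0,\dots,\sigma_{n+1})$, so that the image of $\widetilde B_{\underline s}$ is exactly $\bigsqcup_{\pi\in\widetilde A_{\underline s}}C_\pi$ with no missing or extra cells. This is precisely what the transfer-map construction of Section~\ref{sec:transfermap} accomplishes; the signed version required here follows from the same argument by matching each reversed inequality with the correspondingly reversed triple, so Theorem~\ref{thm:cyclicdescents} should come out as a sign-decorated instance of the proof of Theorem~\ref{thm:consecutivesums} rather than a genuinely new argument.
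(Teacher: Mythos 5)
Your proposal is correct and follows essentially the same route as the paper: the interval-matrix/total-unimodularity argument for integrality is Lemma~\ref{lem:Bsintegral}, and the partial-sums-mod-$1$ transfer map with the local equivalence between the sign of $x_i+x_{i+1}-1$ and the orientation of the triple $(i-1,i,i+1)$ is exactly the content of Lemma~\ref{lem:measurepreserving}, Lemma~\ref{lem:simplexvolume} and equation~\eqref{eq:transfers} of Proposition~\ref{prop:transfer}. The measure-zero bookkeeping you flag as the remaining obstacle is handled in the paper by restricting to the full-measure sets $X_{n+1}$ and $Y_{n+1}$, which suffices for the volume statement.
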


\begin{rem}
The polytopes $\hat{B}_{2,n}$ arise as the chain polytopes of zigzag posets~\cite{S86}. For $k \leq n \leq 2k$, the polytopes $\hat{B}_{k,n}$ also arise as chain polytopes of some posets. This corresponds to the Catalan/Narayana range of parameters (see Theorem~\ref{thm:Narayana} and the paragraph following it). However we stress that in general, the polytopes $B_{I,n}$ and $\widetilde{B}_{\underline{s}}$ do not arise as chain polytopes of some posets. For example, one can show that $\hat{B}_{3,n}$ is not a chain polytope whenever $n\geq7$.
\end{rem}

\subsection{\texorpdfstring{Ehrhart $h^*$-polynomials}{Ehrhart h*-polynomials}}

One can refine Theorem~\ref{thm:consecutivesums} by considering the Ehrhart $h^*$-polynomials of the polytopes $B_{I,n}$, whose evaluations at $1$ give the volumes of the polytopes. The book~\cite{BR15} is a good reference for the basics of Ehrhart theory.

\begin{defn}
 If $P\subset \mathbb{R}^n$ is a lattice polytope, its Ehrhart function is defined for every integer $t\geq 0$ by
\[
E( P , t ) := \# (t\cdot P) \cap \mathbb{Z}^n
\]
where $t\cdot P$ is the dilation of $P$ by a factor $t$, i.e.~$t\cdot P = \{t\cdot   v \; | \;   v \in P \}$.
\end{defn}

This function may in fact also be defined if $P$ is an arbitrary bounded subset of $\mathbb{R}^n$ and this point of view will be useful later. When $P$ is a lattice polytope, the function $E(P,t)$ is actually a polynomial function of $t$. Hence it is called the \emph{Ehrhart polynomial} of $P$. 

\begin{defn}
 If $P\subset \mathbb{R}^n$ is a lattice polytope, we set
 \[
    E^*(P,z) := (1-z)^{n+1} \sum_{t=0}^\infty E(P,t) z^t.
 \]
The function $E^*(P,z)$ is a polynomial in $z$, called the \emph{Ehrhart $h^*$-polynomial} of $P$.
\end{defn}

By a result of Stanley~\cite{S80}, the coefficients of $E^*(P,z)$ are nonnegative integers whose sum equals the normalized volume of $P$. We provide a combinatorial interpretation of the coefficients of the $h^*$-polynomial of $B_{I,n}$ in terms of descents in the elements of $A_{I,n}$.

To every total cyclic order $\pi\in\calZ_n$ we associate the word $\underline{\pi}$ of length $n+1$ obtained by placing the elements of $\pi$ on a circle in the cyclic order imposed by $\pi$ and reading them in the clockwise direction, starting from $0$. For example, for the total cyclic order $\pi$ depicted on Figure~\ref{fig:cyclicorder7}, we have $\underline{\pi}=(0,7,1,4,5,2,6,3)$. We denote by $\calW_n$ the set of words of length $n+1$ with letters in $[n]$ that are all distinct and starting with $0$. Then $\pi \in\calZ_n \mapsto \underline{\pi} \in\calW_n$ is a bijection.

Given a word $\underline{w}=(w_0,\ldots,w_n)\in\calW_n$ and an integer $i$ such that $0 \leq i\leq n-1$, we say that $\underline{w}$ has a {\em descent} at position $i$ if $w_{i+1} < w_i$. We denote by $\des(\underline{w})$ the number of positions at which $\underline{w}$ has a descent. For example, the word $\underline{w}=(0,3,4,1,5,2)$ has two descents, at positions $2$ and $4$, and thus $\des(\underline{w})=2$. We have the following generalization of Theorem~\ref{thm:consecutivesums}.

\begin{thm}
\label{thm:Ehrhartconsecutivesums}
For $n\geq1$ and $I \subset P_n$, we have
\begin{equation}
\label{eq:Ehrhartdescent}
E^*(B_{I,n},z)=\sum_{\pi \in A_{I,n}} z^{\des(\underline{\pi})}.
\end{equation}
\end{thm}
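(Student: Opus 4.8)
The plan is to upgrade the volume computation of Theorem~\ref{thm:consecutivesums} to an equality of Ehrhart $h^*$-polynomials by tracking a descent statistic through the transfer map. The starting point is that Theorem~\ref{thm:consecutivesums} is proved (per the organization outlined in Section~\ref{sec:transfermap}) via a piecewise-linear transfer map that sends $B_{I,n}$ to a union of simplices indexed by the circular extensions $\pi\in A_{I,n}$, each of which is (a translate/unimodular image of) a standard simplex, so that the normalized volume is exactly $\#A_{I,n}$. The first step is therefore to recall that transfer map in enough detail to see that it is a bijection between $B_{I,n}$ (up to a measure-zero set) and a disjoint union $\bigsqcup_{\pi\in A_{I,n}} \Delta_\pi$, where each $\Delta_\pi$ is a half-open unimodular simplex of the form $\{y : 0 = y_{j_0} \le y_{j_1} \le \cdots \le y_{j_n} \le 1\}$ (or with some strict inequalities) determined by the word $\underline{\pi}$. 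Crucially, since $B_{I,n}$ is a lattice polytope and each $\Delta_\pi$ is unimodular, this decomposition is a \emph{lattice} (triangulation-type) decomposition, so Ehrhart functions add: $E(B_{I,n},t) = \sum_{\pi\in A_{I,n}} E(\Delta_\pi, t)$ for all $t\ge 0$, with the $\Delta_\pi$ being suitably half-open to avoid double-counting boundary lattice points.

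The second step is the classical computation of the $h^*$-polynomial of a half-open unimodular simplex. For the half-open simplex $\Delta = \{0 = y_0 \le y_1 \le \cdots \le y_n \le 1\}$, opened along the facets corresponding to a prescribed descent set $D \subset \{1,\ldots,n\}$ (i.e.\ replacing $y_{i}\le y_{i+1}$ by $y_i < y_{i+1}$ for $i\in D$, and $y_n \le 1$ by $y_n<1$ if $n\in D$), one has the standard fact that $\sum_{t\ge 0} E(\Delta,t) z^t = z^{|D|}/(1-z)^{n+1}$, hence $E^*(\Delta,z) = z^{|D|}$. This is exactly the barycentric/"staircase" computation behind the Eulerian-number expansion of the $h^*$-polynomial of the cube; I would cite~\cite{BR15} for it or include the one-line lattice-point count. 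So the third step is to identify, for each $\pi\in A_{I,n}$, which facets of $\Delta_\pi$ are the "open" ones under the transfer map, and to check that the number of open facets equals $\des(\underline{\pi})$. This is the combinatorial heart: the transfer map is built from partial sums of consecutive coordinates, and the inequality directions (which partial sums are $\le 1$ versus unconstrained, and the ordering of the resulting coordinates around the circle) translate a descent of $\underline{\pi}$ at position $i$ — i.e.\ $w_{i+1} < w_i$ — precisely into a facet of $\Delta_\pi$ that must be taken open (equivalently, a "wrap-around" past $1$ or a strict inequality in the sorted coordinates). Assembling: $E^*(B_{I,n},z) = \sum_{\pi} E^*(\Delta_\pi,z) = \sum_{\pi} z^{\des(\underline{\pi})}$, where the first equality uses $(1-z)^{n+1}\sum_t E(B_{I,n},t)z^t = \sum_\pi (1-z)^{n+1}\sum_t E(\Delta_\pi,t) z^t$ together with the fact that each summand is already a polynomial.

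The main obstacle I expect is the bookkeeping in the third step: getting the half-open structure of the $\Delta_\pi$ exactly right so that (a) the pieces genuinely partition $B_{I,n}$ on the lattice-point level for every dilation $t$ (not just in volume), and (b) the count of open facets is \emph{literally} $\des(\underline{\pi})$ rather than $\des$ shifted by a constant or computed with a different convention (e.g.\ cyclic descents, or descents of $\underline{\pi}$ versus of its inverse). The cleanest way to handle (a) is to fix, once and for all, a generic linear functional (a "shelling direction") and open each simplex along the facets visible from it, which automatically yields a disjoint lattice decomposition; then (b) becomes the purely combinatorial assertion that this visibility condition matches the descent condition on $\underline{\pi}$, which I would verify by unwinding the definition of the transfer map on the relevant facets. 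A useful sanity check along the way is $k=1$: there $B_{I,n}=[0,1]^n$, $A_{I,n}=\calZ_n$, and the claimed identity becomes $E^*([0,1]^n,z) = \sum_{\pi\in\calZ_n} z^{\des(\underline{\pi})} = \sum_{w\in\calW_n} z^{\des(\underline w)} = A_n(z)$, the $n$th Eulerian polynomial, which is the textbook $h^*$-polynomial of the unit cube — confirming both the normalization and the descent convention.
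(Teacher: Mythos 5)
Your proposal is correct in outline and rests on the same mechanism as the paper's proof: push the polytope through the transfer map, recognize the image as a disjoint union of half-open unimodular simplices indexed by $A_{I,n}$, and use the fact that a unimodular simplex opened along $d$ facets has $h^*$-polynomial $z^{d}$. Where you differ is in how the half-open bookkeeping is organized, and the paper's route sidesteps exactly the two places you flag as delicate. Rather than opening each simplex along a $\pi$-dependent set of $\des(\underline{\pi})$ facets chosen by a shelling direction, the paper opens everything uniformly: it introduces $B'_{I,n}\subset[0,1)^n$ with all defining inequalities strict, notes that on lattice points $x_{i+1}+\cdots+x_j<t$ is equivalent to $\leq t-1$, hence $E(B'_{I,n},t)=E(B_{I,n},t-1)$ and $E^*(B'_{I,n},z)=z\,E^*(B_{I,n},z)$; a desingularization argument (perturbing $x$ by $\epsilon$) then shows $F_n(B'_{I,n})=\bigsqcup_{\pi\in A_{I,n}}S_\pi$ exactly, and each $S_\pi$ satisfies $E^*(S_\pi,z)=z^{\des(\underline{\pi})+1}$ by a lemma of~\cite{HJV16}, the extra factor of $z$ cancelling against the $B\to B'$ shift. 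If you pursue your version, two points need genuine care. First, additivity of the Ehrhart counts across your decomposition is not automatic from unimodularity, because the pieces live in the target of a piecewise-linear map; you need either the paper's observation that $t\cdot v\in\Z^n$ if and only if $t\cdot F_n(v)\in\Z^n$, or a check that the preimages $F_n^{-1}(S_\pi)$ are honest lattice simplices in the source (true, but it requires noting that $F_n$ is affine on each such preimage). Second, the pieces the transfer map hands you naturally have $\des(\underline{\pi})+1$ open facets (the top inequality $y<1$ is always open), so a partition of the \emph{closed} polytope into pieces with exactly $\des(\underline{\pi})$ open facets is a genuinely different decomposition that you would have to construct; the uniform half-opening plus the shift by $z$ avoids that choice entirely.
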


In Figure~\ref{fig:numericaldata}, we display the polynomials $E^*(\hat{B}_{k,n},z)$ for some small values of $k$ and $n$. These polynomials have several remarkable features: they are palindromic, they contain the Eulerian polynomials and they stabilize in a certain limit to the Narayana polynomials.

Recall that a polynomial $R(z)=\sum_{h=0}^d a_h z^h$ of degree $d$ is called \emph{palindromic} if its sequence of coefficients is symmetric, i.e. for $0 \leq h \leq d$, we have $a_h=a_{d-h}$.

\begin{thm}
\label{thm:palindromicity}
For $1 \leq k\leq n$, the polynomial $E^*(\hat{B}_{k,n},z)$ is palindromic. 
\end{thm}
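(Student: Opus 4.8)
The plan is to prove palindromicity by establishing that $\hat B_{k,n}$ is a Gorenstein polytope, since by a theorem of Stanley (or Hibi) a lattice polytope has palindromic $h^*$-polynomial precisely when some dilate $c \cdot \hat B_{k,n}$ has a unique interior lattice point whose translate to the origin makes the dual polytope a lattice polytope; equivalently, it suffices to exhibit a lattice point in the relative interior of a suitable dilate and check the symmetric structure of the facet inequalities. In our setting the facets of $\hat B_{k,n}$ come in two families: the coordinate inequalities $x_i \geq 0$ (there are $n$ of them) and the consecutive-sum inequalities $x_{i+1} + \cdots + x_{i+k} \leq 1$ for $0 \leq i \leq n-k$ (there are $n-k+1$ of them). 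I would first write down a candidate ``center'': a rational point $p$ in the interior and a dilation factor $c$ such that $c\,p \in \Z^n$ and $c\,p$ lies in the relative interior of $c \cdot \hat B_{k,n}$, i.e. strictly satisfies all the inequalities above with a uniform slack. The natural guess, motivated by the symmetry of the defining inequalities, is the point with all coordinates equal to $1/(k+1)$ (so that every window of $k$ consecutive coordinates sums to $k/(k+1) < 1$), giving $c = k+1$ and interior lattice point $(1,1,\ldots,1)$ in $(k+1)\cdot \hat B_{k,n}$.

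Next I would verify the Gorenstein/reflexivity-after-translation condition: translating $(k+1)\cdot \hat B_{k,n}$ by $-(1,\ldots,1)$, one must check that the resulting polytope $Q$ has the property that its polar dual $Q^\circ$ is again a lattice polytope, i.e. every facet of $Q$ lies at lattice distance exactly $1$ from the origin. Each coordinate facet $x_i = 0$ of $\hat B_{k,n}$ becomes, after dilation and translation, the hyperplane $y_i = -1$, which is at lattice distance $1$ from the origin. Each consecutive-sum facet $x_{i+1} + \cdots + x_{i+k} = 1$ becomes $y_{i+1} + \cdots + y_{i+k} = (k+1) - k = 1$, also at lattice distance $1$ (the defining functional is primitive since its coefficients are $0$s and $1$s with gcd $1$). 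This shows $Q$ is a reflexive polytope (after translation), hence $\hat B_{k,n}$ is Gorenstein with index $k+1$, and therefore $E^*(\hat B_{k,n}, z)$ is palindromic by Hibi's characterization. I would cite \cite{BN08} for the relevant statements about Gorenstein polytopes and the equivalence with palindromic $h^*$-polynomials.

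The main obstacle is not the computation itself but making sure the polytope is presented by exactly these facets with no redundancies that could spoil the lattice-distance count — that is, confirming that each listed inequality actually defines a facet (a codimension-one face) of $\hat B_{k,n}$, and that there are no further facets. This requires checking full-dimensionality of $\hat B_{k,n}$ (clear, since a small ball around $(\tfrac{1}{2k},\ldots,\tfrac{1}{2k})$ lies inside it) and that dropping any one inequality strictly enlarges the polytope; for the coordinate inequalities this is immediate, and for the consecutive-sum inequalities one exhibits a point saturating only that inequality, e.g. concentrating mass $1$ split across the relevant window while keeping all other windows under $1$. Once the facet description is pinned down, the reflexivity check above is purely formal. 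An alternative, more combinatorial route — which I would mention as a remark — is to prove palindromicity directly from Theorem~\ref{thm:Ehrhartconsecutivesums} by constructing an explicit involution on $\hat A_{k,n}$ sending a total cyclic order with $d$ descents to one with $\deg E^* - d$ descents (e.g. via reversing the cyclic order, $\pi \mapsto \bar\pi$ where $(x,y,z) \in \bar\pi \iff (z,y,x)\in\pi$, composed with a relabeling that preserves the chain conditions); this would also identify the degree of $E^*(\hat B_{k,n},z)$, but verifying that the involution preserves $\hat A_{k,n}$ is exactly where the window structure must be used carefully, so I expect the Gorenstein argument to be the cleaner path.
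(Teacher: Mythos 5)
Your proposal is correct and follows essentially the same route as the paper: both pass to the dilate $(k+1)\cdot \hat{B}_{k,n}$ translated by $-(1,\ldots,1)$, observe that the resulting polytope is reflexive because all defining inequalities have integral normals and right-hand sides $\pm 1$, and conclude palindromicity via the Gorenstein characterization of \cite{BN08}. Your closing remark about a combinatorial involution on $\hat{A}_{k,n}$ matches a question the paper itself leaves open.
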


Note that in general, the polynomials $E^*(B_{I,n},z)$ are not palindromic.

\begin{rem}
In the case $k=1$, $\hat{B}_{1,n}$ is the unit hypercube $[0,1]^n$ and its $h^*$-polynomial is well-known to be the $n$th \emph{Eulerian polynomial}, whose coefficients enumerate the permutations of $\{1,\ldots,n\}$ refined by their number of descents (see e.g.~\cite{HJV16}). This is consistent with the fact that $\hat{A}_{1,n}$ is in bijection with the set of all permutations of $\{1,\ldots,n\}$, arising upon removing the initial $0$ from each word $\underline{\pi}$ for $\pi\in \hat{A}_{1,n}$.
\end{rem}

For $1\leq k\leq n$, define the \emph{Narayana numbers}~\cite[Sequence A001263]{OEIS})
\begin{equation}
\label{eq:narayana-def}
N(n,k):=\frac{1}{n} \binom n k \binom n{k-1},
\end{equation}
and the \emph{Narayana polynomials}
\begin{equation}
\label{eq:narayana-poly-def}
Q_n(z):=\sum_{k=1}^n N(n,k)z^{k-1}.
\end{equation}
The Narayana numbers are a well-known refinement of Catalan numbers, counting for example the number of Dyck paths with prescribed length and number of peaks (see e.g.~\cite[Example III.13]{FS09}). We have the following stabilization result of the Ehrhart $h^*$-polynomials of $\hat{B}_{k,n}$ to the Narayana polynomials.

\begin{thm}
\label{thm:Narayana}
For $1 \leq k \leq n \leq 2k$, we have
\[
E^*(\hat{B}_{k,n},z)=Q_{n-k+1}(z).
\]
\end{thm}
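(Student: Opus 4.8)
The plan is to combine Theorem~\ref{thm:Ehrhartconsecutivesums} with a direct combinatorial analysis of $\hat A_{k,n}$ in the range $k\le n\le 2k$. By Theorem~\ref{thm:Ehrhartconsecutivesums} it suffices to show that
\[
\sum_{\pi\in\hat A_{k,n}} z^{\des(\underline\pi)} = Q_{n-k+1}(z) = \sum_{j=1}^{n-k+1} N(n-k+1,j)\, z^{j-1},
\]
so I need a bijection between $\hat A_{k,n}$ and a set of Narayana objects (say, Dyck paths of semilength $n-k+1$, or $231$-avoiding permutations, or plane trees) that sends $\des(\underline\pi)$ to (peaks$-1$), or whatever the natural statistic is. First I would unwind what the chain conditions say when $n\le 2k$. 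The conditions are that $(i,i+1,\dots,i+k)$ is a $\pi$-chain for every $0\le i\le n-k$; writing $\underline\pi=(w_0,\dots,w_n)\in\calW_n$ with $w_0=0$, being a $\pi$-chain for the block $\{i,\dots,i+k\}$ means that reading $\underline\pi$ cyclically starting from $i$, the other elements $i+1,\dots,i+k$ appear in that order. Because $n\le 2k$, any two of the forbidden/prescribed blocks $\{i,\dots,i+k\}$ and $\{i',\dots,i'+k\}$ overlap heavily (their union is all of $[n]$ once $|i-i'|\le k$, which always holds here since $n-k\le k$), and this is exactly the regime where the $\hat B_{k,n}$ become chain polytopes of posets, as remarked after Theorem~\ref{thm:cyclicdescents}. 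So the first real step is to identify the relevant poset $P$ on $\{1,\dots,n\}$: I expect it to be a ``fence''/zigzag-type poset whose linear extensions are counted by the Narayana numbers $N(n-k+1,\cdot)$, refined by descents.

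The cleanest route, I think, is to pass through the transfer-map picture of Section~\ref{sec:transfermap} rather than fight with cyclic orders directly: the transfer map sends $\hat B_{k,n}$ to a union of simplices indexed by $\hat A_{k,n}$, and the descent statistic on $\underline\pi$ should correspond, via the standard chain-polytope-to-order-polytope transfer and Stanley's formula for $h^*$ of an order polytope, to the descent statistic of linear extensions of $P$. Concretely: (1) show $\hat B_{k,n}$ for $k\le n\le 2k$ is the chain polytope $\mathcal C(P)$ of an explicit poset $P$ with $n$ elements — the inequalities $x_{i+1}+\dots+x_{i+k}\le 1$ with consecutive blocks of length $k$ overlapping in length $\ge 2k-n\ge 0$ cut out exactly the chain polytope of the poset whose maximal chains are these blocks of size $k$ (one needs the blocks to be the maximal antichains, dually the maximal chains of $P^\ast$; sorting this out is step one). (2) Invoke the fact that $h^*$ of a chain polytope equals $h^*$ of the order polytope $\mathcal O(P)$, which by Stanley equals $\sum_{\sigma} z^{\des(\sigma)}$ over linear extensions $\sigma$ of $P$. (3) Match this with $\sum_{\pi\in\hat A_{k,n}}z^{\des(\underline\pi)}$ by exhibiting the bijection $\hat A_{k,n}\leftrightarrow\{\text{linear extensions of }P\}$ that carries $\des(\underline\pi)$ to $\des(\sigma)$ — here the map $\pi\mapsto\underline\pi\mapsto(w_1,\dots,w_n)$ (drop the leading $0$) should already be essentially it, and one checks the chain conditions translate into the order relations of $P$. (4) Finally compute $\sum_\sigma z^{\des(\sigma)}$ for this specific $P$ and verify it equals $Q_{n-k+1}(z)$; with $P$ a fence-type poset on $n$ points this is a known (or easily proved by a peak/valley bijection to Dyck paths of semilength $n-k+1$) identity.

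Alternatively — and this may be less error-prone — one can prove the identity by induction on $n$ for fixed $k$, with base case $n=k$ (where $\hat A_{k,k}$ consists of the single chain $(0,1,\dots,k)$, $\des=0$, and $Q_1(z)=1$, matching), using the discrete difference equations that Stanley gives in Exercise~4.56(c) for the Ehrhart/volume polynomials of $\hat B_{k,n}$, upgraded to the $h^*$ level, and checking they coincide with the standard recurrence $N(m,j)=N(m-1,j)+\sum(\cdots)$ for Narayana numbers within the range $n\le 2k$. The main obstacle in either approach is step (3)/the bijection: one must verify carefully that in the range $n\le 2k$ the cyclic-chain conditions defining $\hat A_{k,n}$ are equivalent to a set of linear (non-cyclic) constraints on the word $(w_1,\dots,w_n)$ — i.e. that the ``wrap-around'' part of each cyclic chain condition is automatically satisfied or vacuous once $n\le 2k$ — because this is precisely what fails for $n>2k$ and is what makes the Narayana answer special to this range. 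I would isolate this as a lemma: for $k\le n\le 2k$ and $\pi\in\calZ_n$ with $w_0=0$, $(i,\dots,i+k)$ is a $\pi$-chain for all $0\le i\le n-k$ if and only if [explicit poset condition on $w_1,\dots,w_n$], and prove it by a short case analysis on where $0$ sits relative to each block.
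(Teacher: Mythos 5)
Your overall strategy is sound and the reduction via Theorem~\ref{thm:Ehrhartconsecutivesums} is the right starting point, but your main route is not the one the paper takes, and as written it has real gaps. The paper's proof is purely combinatorial: it first treats the extreme case $\hat{A}_{m,2m}$ by an explicit bijection $H_m$ onto nondecreasing parking functions $(p_0,\dots,p_m)$ (each $p_i$ records the first ``small'' number clockwise after $m+i$), shows that descents of $\underline{\pi}$ correspond to ascents of $H_m(\pi)$, and quotes that the ascent-refined count of nondecreasing parking functions is Narayana. The general case $k\le n\le 2k$ is then reduced to $m=n-k$ by observing that the chain conditions force $n-k,\dots,k$ to be consecutive in $\underline{\pi}$ (this is exactly the heavy-overlap phenomenon you identified: every block contains the middle segment $\{n-k,\dots,k\}$ --- note, however, that your claim that the \emph{union} of any two blocks is all of $[n]$ is false in general; only the nonempty intersection matters) and contracting that segment to a single element, which preserves descents. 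Your route --- realize $\hat{B}_{k,n}$ as a chain polytope, pass to the order polytope, and use Stanley's descent formula for linear extensions --- is precisely the alternative proof the authors mention in one sentence after their proof but deliberately do not carry out.

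The gaps in your plan are concentrated where you expect them to be, but they are larger than you suggest. First, the poset $P$ is not a fence for $k\ge 3$: the naive relation ``$i<_P j$ iff $i<j$ and $j-i\le k-1$'' is not transitive, so you cannot orient the comparabilities by the usual order of integers; the correct object is the cell poset of a skew Young diagram (already for $k=3$, $n=4$ the poset is Y-shaped, with maximal chains $\{1,2,3\}$ and $\{4,2,3\}$ under a non-monotone labelling). Constructing $P$, choosing a natural labelling so that Stanley's formula $E^*(\mathcal{O}(P),z)=\sum_\sigma z^{\des(\sigma)}$ applies, and verifying $\hat{B}_{k,n}=\mathcal{C}(P)$ is a genuine piece of work, not a formality. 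Second, the identity ``descent polynomial of linear extensions of $P$ equals $Q_{n-k+1}(z)$'' is exactly the content of the theorem in disguise; asserting it as ``known or easily proved'' begs the question, and you would need to actually exhibit the Dyck-path bijection tracking peaks (or equivalently reprove the parking-function statement the paper cites). Third, your step (3) is redundant: if you compute $E^*(\hat{B}_{k,n},z)$ via the order polytope you never need to match $\des(\underline{\pi})$ with $\des(\sigma)$, while if you work combinatorially inside $\hat{A}_{k,n}$ you never need the order polytope. Your fallback induction on $n$ using Stanley's difference equations is also not the paper's argument and is left entirely unexecuted. In short: viable alternative route, correct identification of where the range $n\le 2k$ enters, but the two substantive steps are only gestured at.
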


This result generalizes the fact that the normalized volume of $\hat{B}_{k,n}$ when $k \leq n \leq 2k$ is the $(n-k+1)$st Catalan number~\cite[Exercise 4.56(e)]{S12}.

\section{Lattice polytopes}
\label{sec:lattice}

In this section we show that the polytopes $B_{I,n}$ and $\widetilde{B}_{\underline{s}}$ are lattice polytopes by appealing to the theory of unimodular matrices. A rectangular matrix $M$ is said to be {\em totally unimodular} if every nonsingular square submatrix of $M$ is unimodular, i.e. has determinant $\pm 1$. By~\cite[Theorem 19.1]{schrijver-1986}, if $M$ is totally unimodular then for every integral vector $c$, the polyhedron defined by
\begin{equation}
\label{eq:polytopeequation}
\{x \: | \: x \geq 0, Mx \leq c \}
\end{equation}
is integral, i.e. it is equal to the convex hull of its integer points. In the case of polytopes, which are bounded polyhedra, the integrality property is equivalent to a polytope being a lattice polytope. Thus it suffices to realize the polytopes $B_{I,n}$ and $\widetilde{B}_{\underline{s}}$ as a set of inequalities in the form of~\eqref{eq:polytopeequation} involving a totally unimodular matrix $M$ to conclude that these polytopes are lattice polytopes. This is what we do now.

\begin{lem}
\label{lem:BIintegral}
For $n\geq 1$ and $I\subset P_n$, there exists a totally unimodular matrix $M_{I,n}$ and an integral vector $c_{I,n}$ such that
\[
B_{I,n}=\{x \: | \: x \geq 0, M_{I,n}x \leq c_{I,n} \}.
\]
\end{lem}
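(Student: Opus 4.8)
The plan is to write the defining inequalities of $B_{I,n}$ in the form $x \geq 0$, $M_{I,n} x \leq c_{I,n}$, and to exhibit $M_{I,n}$ as the incidence matrix of a collection of \emph{intervals} of consecutive coordinates. Concretely, for each pair $(i,j)\in I$ the inequality $x_{i+1}+\cdots+x_j \leq 1$ is the row of $M_{I,n}$ whose entries are $1$ in columns $i+1,\ldots,j$ and $0$ elsewhere; we take $c_{I,n}$ to be the all-ones vector of length $\#I$. Thus $M_{I,n}$ is a $0/1$ matrix in which the ones in each row occupy a contiguous block of column indices. Such matrices are classically known to be totally unimodular — they are a special case of \emph{interval matrices} (equivalently, matrices with the consecutive-ones property in each row). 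So the core of the proof is just to invoke (or reprove) this total unimodularity, and then apply~\cite[Theorem 19.1]{schrijver-1986} together with the fact, noted in the text, that for a bounded polyhedron integrality is the same as being a lattice polytope.

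The key steps, in order, are: (1) set up $M_{I,n}$ and $c_{I,n}$ explicitly as above and check that $B_{I,n} = \{x \mid x\geq 0,\ M_{I,n}x \leq c_{I,n}\}$, which is immediate from the definition of $B_{I,n}$; (2) prove that $M_{I,n}$ is totally unimodular. For step (2) the cleanest route is the standard determinant argument for interval matrices: given any square submatrix $S$ obtained by selecting some rows (intervals) and some columns, perform the column operation that replaces each column by its difference with the preceding selected column (i.e. right-multiply by the unimodular bidiagonal matrix with $1$'s on the diagonal and $-1$'s on the subdiagonal). Because each row of $S$ has its ones in consecutive positions, after this operation every row has at most one $+1$ and at most one $-1$, so $S\cdot U$ is the (signed) incidence matrix of a subgraph of a path; such a matrix has determinant in $\{-1,0,1\}$ by the usual argument (expand along a row or column containing a single nonzero entry, or note it is the incidence matrix of a bipartite-orientable graph). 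Since $U$ is unimodular, $\det S \in \{-1,0,1\}$, giving total unimodularity. Alternatively one may simply cite that interval matrices / consecutive-ones matrices are totally unimodular. (3) Apply Schrijver's theorem to conclude $B_{I,n}$ is integral, hence a lattice polytope.

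There is essentially no serious obstacle here; the only thing to be a little careful about is the bookkeeping in step (2) — making sure the "difference with the previous selected column" operation is applied to the columns of $S$ in increasing order of their original index, so that the consecutive-ones structure of each row really does collapse to at most two nonzero entries — and being explicit that the bidiagonal transition matrix has determinant $\pm 1$. One should also remark that degenerate rows (a pair $(i,j)$ with $j=i+1$, giving a single-coordinate row, or a row all of whose columns fail to be selected) cause no problem: they contribute at most one nonzero entry already. With these details in place the lemma follows.
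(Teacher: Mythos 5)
Your overall route is the same as the paper's: realize the defining inequalities via an \emph{interval matrix} (consecutive ones in each row), invoke total unimodularity, and apply \cite[Theorem 19.1]{schrijver-1986}. The determinant argument you sketch for step (2) --- differencing consecutive selected columns via a unimodular bidiagonal matrix to reduce each row to at most one $+1$ and one $-1$ --- is correct and is exactly the standard proof of the fact the paper merely cites, so that part is fine (if anything, more self-contained than the paper).

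There is, however, one genuine defect in step (1): your $M_{I,n}$ consists only of the $\#I$ rows coming from the pairs $(i,j)\in I$, and with that choice the claimed equality $B_{I,n}=\{x \mid x\geq 0,\ M_{I,n}x\leq c_{I,n}\}$ is false in general. The definition of $B_{I,n}$ includes the constraint $x\in[0,1]^n$, and the upper bounds $x_q\leq 1$ are not implied by your inequalities unless every coordinate $q$ lies in some interval $\{i+1,\dots,j\}$ with $(i,j)\in I$; for a coordinate not covered by any pair (e.g.\ $n=2$, $I=\{(0,1)\}$, where $x_2$ is unconstrained from above) your right-hand side is an unbounded polyhedron, not the polytope $B_{I,n}$. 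The fix is exactly what the paper does: stack the $n\times n$ identity matrix on top of your interval rows, take $c_{I,n}$ to be the all-ones vector of length $n+\#I$, and observe that the identity rows are themselves (singleton) intervals, so the enlarged matrix is still an interval matrix and the rest of your argument goes through unchanged.
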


\begin{proof}
Fix $n\geq 1$ and $I\subset P_n$. Write
\[
I=\{ (i_1,j_1),\ldots,(i_m,j_m) \},
\]
where $m\geq1$ is the cardinality of $I$. Define
\[
M'_{I,n}:=\left(\ind{i_p < q \leq j_p} \right)_{\substack{1\leq p \leq m \\ 1 \leq q \leq n}}.
\]
In words, $M'_{I,n}$ is the $m\times n$ matrix such that for $1 \leq p \leq m$, the $p$th row of $M'_{I,n}$ contains a $1$ in positions located between $i_p+1$ and $j_p$ and $0$ elsewhere. Set $M_{I,n}$ to be the $(m+n)\times n$ matrix whose first $n$ rows consist of the identity and whose last $m$ rows consist of $M'_{I,n}$. Let $c_{I,n}$ be the vector in $\R^{m+n}$ with all coordinates equal to $1$.
Then
\[
B_{I,n}=\{x \: | \: x \geq 0, M_{I,n}x \leq c_{I,n} \}.
\]
The matrix $M_{I,n}$ has the property that it is a matrix with entries in $\{0,1\}$ where the $1$'s in each line are arranged consecutively.  Such matrices are called {\em interval matrices} and form a well-known class of totally unimodular matrices~\cite[Chapter 19, Example 7]{schrijver-1986}.
\end{proof}

\begin{lem}
\label{lem:Bsintegral}
For $n\geq 1$ and $\underline{s}\in\{ +,-\}^n$, there exists a totally unimodular matrix $M_{\underline{s}}$ and an integral vector $c_{\underline{s}}$ such that
\[
\widetilde{B}_{\underline{s}}=\{x \: | \: x \geq 0, M_{\underline{s}} x \leq c_{\underline{s}} \}.
\]
\end{lem}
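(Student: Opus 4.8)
The plan is to follow the template of the proof of Lemma~\ref{lem:BIintegral}: write down $M_{\underline{s}}$ and $c_{\underline{s}}$ explicitly so that the equality $\widetilde{B}_{\underline{s}}=\{x\mid x\ge 0,\ M_{\underline{s}}x\le c_{\underline{s}}\}$ is immediate from the definitions, and then establish total unimodularity of $M_{\underline{s}}$ by reducing to the interval-matrix case already used there. Concretely, I would let $M_{\underline{s}}$ be the $(2n+1)\times(n+1)$ matrix whose first $n+1$ rows form the identity (encoding $x_j\le 1$ for $1\le j\le n+1$) and whose last $n$ rows are indexed by $i\in\{1,\dots,n\}$, the $i$-th such row being the $0/1$ row vector supported on the consecutive positions $\{i,i+1\}$ when $s_i=+$, and the negative of that vector when $s_i=-$. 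Correspondingly, $c_{\underline{s}}\in\R^{2n+1}$ has its first $n+1$ entries equal to $1$, and its $i$-th remaining entry equal to $1$ if $s_i=+$ and to $-1$ if $s_i=-$. Unwinding the definition of $\widetilde{B}_{\underline{s}}$: the inequality $x_i+x_{i+1}\le 1$ (for $s_i=+$) is exactly the corresponding row of $M_{\underline{s}}x\le c_{\underline{s}}$, while $x_i+x_{i+1}\ge 1$ (for $s_i=-$) rewrites as $-x_i-x_{i+1}\le -1$, again the corresponding row; together with $x\ge 0$ and the identity rows, this yields the claimed equality on the nose.

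It then remains to check that $M_{\underline{s}}$ is totally unimodular, and this is the one point where a (small) idea beyond Lemma~\ref{lem:BIintegral} is needed, since $M_{\underline{s}}$ is not itself an interval matrix: the rows coming from letters $s_i=-$ carry entries $-1$. The key observation is that negating all of those rows turns $M_{\underline{s}}$ into a matrix each of whose rows is a $0/1$ vector with consecutive support (a single index for the identity rows, the pair $\{i,i+1\}$ for the last $n$ rows). That modified matrix is an interval matrix, hence totally unimodular by~\cite[Chapter 19, Example 7]{schrijver-1986}, and negating rows of a matrix preserves total unimodularity, since it only multiplies the determinant of each square submatrix by $\pm1$. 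Therefore $M_{\underline{s}}$ is totally unimodular, and the argument from the paragraph preceding Lemma~\ref{lem:BIintegral} (via~\cite[Theorem 19.1]{schrijver-1986}) shows that $\{x\mid x\ge 0,\ M_{\underline{s}}x\le c_{\underline{s}}\}$ is integral; being bounded, as it is contained in $[0,1]^{n+1}$, it is a lattice polytope.

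I do not anticipate a genuine obstacle; the only thing to get right is the bookkeeping for the ``reversed'' inequalities $x_i+x_{i+1}\ge 1$, and the point is precisely that rewriting them as $-x_i-x_{i+1}\le -1$ produces rows that are literal negatives of interval-matrix rows, so total unimodularity is not disturbed. If one preferred a description with no negative entries in $M_{\underline{s}}$, an equivalent route is the unimodular affine substitution replacing $x_i$ by $1-x_i$ at the odd-indexed coordinates: every pair inequality then becomes a difference inequality $\pm(y_a-y_b)\le 0$, realizing $\widetilde{B}_{\underline{s}}$ as a unimodular affine image of the polytope cut out by the box inequalities together with the transpose of the incidence matrix of an oriented path on $n+1$ vertices, which is again totally unimodular. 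The direct description above is shorter and parallels Lemma~\ref{lem:BIintegral}, so that is the one I would write up.
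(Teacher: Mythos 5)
Your proposal is correct and coincides with the paper's own proof: the same matrix $M_{\underline{s}}$ and vector $c_{\underline{s}}$, the same unwinding of the $\pm$ inequalities, and the same observation that $M_{\underline{s}}$ becomes an interval matrix after negating the rows coming from letters $s_i=-$, which preserves total unimodularity. The extra detail you supply (negation multiplies each square subdeterminant by $\pm1$) is a welcome elaboration of the paper's one-line justification, but the argument is the same.
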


\begin{proof}
Fix $n\geq 1$ and $s\in\{+,-\}^n$. Define $M'_{\underline{s}}$ to be the matrix of size $n \times (n+1)$ such that for $1 \leq i \leq n$, the entries in positions $(i,i)$ and $(i,i+1)$ of $M'_{\underline{s}}$ are equal to $1$ (resp. $-1$) if $s_i=+$ (resp. $s_i=-$), and all the other entries of $M'_{\underline{s}}$ are zero. Set $M_{\underline{s}}$ to be the $(2n+1) \times (n+1)$ matrix whose $n+1$ first rows consist of the identity matrix and whose last $n$ rows consist of $M'_{\underline{s}}$. Set $c_{\underline{s}}$ to be the vector in $\R^{2n+1}$ with the $(n+1+i)$th coordinate equal to $-1$ if $s_i=-$ for $1\leq i\leq n$ and all the other coordinates equal to $1$. Then
\[
\widetilde{B}_{\underline{s}}=\{x \: | \: x \geq 0, M_{\underline{s}} x \leq c_{\underline{s}} \}.
\]
The matrix $M_{\underline{s}}$ can be realized as an interval matrix (with entries in $\{0,1\}$) up to multiplying some rows by $-1$. Since interval matrices are totally unimodular, the matrix $M_{\underline{s}}$ is also totally unimodular.
\end{proof}

\section{The transfer map}
\label{sec:transfermap}

In this section we prove Theorem~\ref{thm:consecutivesums} and Theorem~\ref{thm:cyclicdescents} by introducing a transfer map $F_n$ from $[0,1]^n$ to itself, which is piecewise linear, (Lebesgue) measure-preserving and bijective outside of a set of measure $0$. We will show that the images under $F_n$ of the polytopes of types $B_{I,n}$ for $I\subset P_{n}$ and $\widetilde{B}_{\underline{s}}$ for $\underline{s}\in\left\{+,-\right\}^n$ are some sets whose normalized volumes are easily seen to enumerate the sets $A_{I,n}$ and $\widetilde{A}_{\underline{s}}$.

For $n\geq1$, we define the map
\[
F_n:(x_1,\ldots,x_n)\in[0,1]^n \rightarrow \left(\sum_{j=1}^i x_j \mod 1 \right)_{1 \leq i\leq n} \in [0,1)^n.
\]
In order to avoid confusion, we will denote the coordinates on the source (resp. target) of $F_n$ by $(x_1,\ldots,x_n)$ (resp. $(y_1,\ldots,y_n)$). 

\begin{lem}
\label{lem:measurepreserving}
The map $F_n$ induces a piecewise linear measure-preserving bijection from $[0,1)^n$ to itself.
\end{lem}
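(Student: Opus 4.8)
The plan is to analyze the map $F_n$ directly by decomposing the cube $[0,1)^n$ into pieces on which $F_n$ is affine with unit Jacobian, and then to check that on each piece $F_n$ is a bijection onto another such piece, with the pieces on the target side tiling $[0,1)^n$ as well. First I would note that $F_n$ is the composition of the "partial sums" linear map $(x_1,\dots,x_n)\mapsto(x_1,x_1+x_2,\dots,x_1+\cdots+x_n)$, which is unimodular (lower-triangular with $1$'s on the diagonal, so Jacobian $1$), followed by the coordinatewise reduction mod $1$. The partial-sum map sends $[0,1)^n$ into the region $\{0\le s_1\le s_2\le\cdots\}$ but with each $s_i<i$; reduction mod $1$ then folds this back into $[0,1)^n$. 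So I would introduce the auxiliary change of variables $y_i = s_i - \lfloor s_i\rfloor$, i.e. record the integer parts $a_i := \lfloor s_i \rfloor = \lfloor x_1+\cdots+x_i\rfloor$.

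The key combinatorial observation is that the integer-part vector $(a_1,\dots,a_n)$ is weakly increasing, with $a_1 = 0$ (in fact $a_i \in\{0,1,\dots,i-1\}$ off a measure-zero set where some partial sum is an integer), and that the sequence of increments $a_i - a_{i-1}\in\{0,1\}$ together with the fractional parts determines $(x_1,\dots,x_n)$ invertibly: indeed $x_i = s_i - s_{i-1} = (y_i - y_{i-1}) + (a_i - a_{i-1})$, and the constraint $x_i\in[0,1)$ forces $a_i - a_{i-1} = 1$ precisely when $y_i < y_{i-1}$ (a "descent" in the $y$-word) and $a_i - a_{i-1}=0$ when $y_i \ge y_{i-1}$. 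Thus for each fixed admissible increment pattern $\varepsilon\in\{0,1\}^{n-1}$ the map $F_n$ restricts to an affine bijection (a translation by an integer vector composed with the unimodular partial-sum map, hence measure-preserving) from the subset of $[0,1)^n$ where $x_i\in[0,1)$ with those prescribed integer carries, onto the subset of $[0,1)^n$ where the descent pattern of $(y_1,\dots,y_n)$ (with the convention $y_0=0$) equals $\varepsilon$. Since these source pieces partition $[0,1)^n$ up to measure zero (the $\varepsilon_i$ are determined by which partial sums cross integers) and the target pieces — indexed by the descent set of the $y$-word — also partition $[0,1)^n$ up to the measure-zero set where two consecutive $y$'s are equal, we conclude $F_n$ is a piecewise-linear, measure-preserving bijection off a null set.

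I would assemble the argument as follows: (1) write $F_n = R\circ S$ with $S$ the unimodular partial-sum map and $R$ coordinatewise fractional part; (2) on the full-measure set where no partial sum $x_1+\cdots+x_i$ is an integer, define the carry vector $a(x)$ and show $a_i - a_{i-1}\in\{0,1\}$ with $a_0=a_1=0$; (3) derive the inversion formula $x_i = y_i - y_{i-1} + (a_i-a_{i-1})$ and observe $a_i-a_{i-1}=\ind{y_i<y_{i-1}}$, so that $(y_1,\dots,y_n)$ alone determines $x$, giving injectivity off a null set; (4) conversely, given any $(y_1,\dots,y_n)\in[0,1)^n$ with all consecutive entries distinct, the formula in (3) produces a valid preimage in $(0,1)^n$, giving surjectivity; (5) on each piece $F_n$ is $S$ followed by an integer translation, hence linear with Jacobian $\pm1$, so measure-preserving. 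The main obstacle is step (3)–(4): one must check carefully that the carries are forced to lie in $\{0,1\}$ (equivalently, that $s_i - s_{i-1} = x_i < 1$ makes $\lfloor s_i\rfloor - \lfloor s_{i-1}\rfloor \le 1$) and that the reconstructed $x_i$ genuinely lands in $[0,1)$ for every $y$, which is exactly where the equivalence "$a_i-a_{i-1}=1 \iff y_i<y_{i-1}$" does the work; handling the measure-zero boundary set where some $y_i=y_{i-1}$ or some partial sum is an integer is routine but should be stated explicitly so that the "bijection outside a null set" claim is clean.
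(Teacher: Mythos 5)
Your argument is correct and essentially identical to the paper's: the inversion formula $x_i = (y_i - y_{i-1}) + \ind{y_i < y_{i-1}}$ that you derive via the carry vector is exactly the paper's explicit inverse map $G_n$, and the decomposition of $[0,1)^n$ (off the null set where some partial sum is an integer) into pieces on which $F_n$ is an integer translate of the unimodular partial-sum map is precisely the paper's argument for measure preservation. The only cosmetic difference is that you assert the bijection only off a null set, whereas your own formula already defines a two-sided inverse on all of $[0,1)^n$ (when $y_i = y_{i-1}$ it correctly returns $x_i = 0$), which is the slightly stronger statement the lemma actually makes.
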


\begin{proof}
For $n\geq1$, define the map
\[
G_n:(y_1,\ldots,y_n)\in[0,1)^n \mapsto (x_1,\ldots,x_n) \in [0,1)^n,
\]
where $x_1:=y_1$ and for $2 \leq i \leq n$,
\[
x_i:=
\begin{cases}
y_i-y_{i-1} &\text{ if } y_i \geq y_{i-1}, \\
1+ y_i-y_{i-1} &\text{ if } y_i < y_{i-1}.
\end{cases}
\]
It is a straightforward check that $G_n$ is a left- and right-inverse of $F_n$ on $[0,1)^n$. Recall that $\N$ denotes the set of all positive integers. For $n\geq 1$, define the set of measure zero
\[
X'_n:=\Big\{(x_1,\ldots,x_n)\in[0,1)^n \; {\Big |} \; \exists j \in\{1,\ldots,n\} \; \text{so that} \; \sum_{i=1}^j x_i \in \N \Big\}.
\]
On each connected component of $[0,1)^n \setminus X'_n$, the map $F_n$ coincides with a translate of the map
\[
F'_n:(x_1,\ldots,x_n)\in[0,1)^n \rightarrow \left(\sum_{j=1}^i x_j \right)_{1 \leq i\leq n}.
\]
Since the matrix of $F'_n$ in the canonical basis is upper triangular with $1$ on the diagonal, $F'_n$ is a measure-preserving linear map and $F_n$ is also measure-preserving.
\end{proof}

\begin{rem}
A map very similar to $G_n$ was introduced in~\cite{S77} in order to show that the volumes of hypersimplices are given by Eulerian numbers.
\end{rem}

For $1\leq i \leq n$, given a word $\underline{w}=(w_0,\ldots, w_n)\in\calW_n$, we define $\pos_{\underline{w}}(i)$ (the position of $i$ in $\underline{w}$) to be the unique $j$ between $1$ and $n$ such that $w_j=i$. We associate to every element of $[0,1)^n$ two words, its standardization (following~\cite{HJV16}) and its cyclic standardization.

\begin{defn}
\label{defn:standardizations}
Let $n\geq1$ and let $y=(y_1,\ldots,y_n)\in[0,1)^n$.
\begin{enumerate}
 \item The \emph{standardization} of $y$, denoted by $\std(y)$, is defined to be the unique permutation $\sigma\in\calS_n$ such that for $1\leq i<j \leq n$, $\sigma(i) > \sigma(j)$ if and only if $y_i > y_j$. Using the one-line notation for permutations, the standardization of $y$ can also be seen as an $n$-letter word.
 \item The \emph{cyclic standardization} of $y$, denoted by $\cs(y)$, is defined to be the unique word $\underline{w}=(w_0,\ldots, w_n)\in\calW_n$ such that for $1\leq i<j \leq n$, $\pos_{\underline{w}}(i) > \pos_{\underline{w}}(j)$ if and only if $y_i > y_j$.
\end{enumerate}
\end{defn}

For example, if $y=(0.2,0.7,0.2,0.1,0.2)$, then $\std(y)=2 5 3 1 4$ and $\cs(y)=0 4 1 3 5 2$. The following result is an immediate consequence of Definition~\ref{defn:standardizations}.

\begin{lem}
\label{lem:standardization}
Let $n\geq1$ and let $y=(y_1,\ldots,y_n)\in[0,1)^n$. Write $\sigma=\std(y)\in\calS_n$. Then the word $\cs(y)$ is obtained by adding the letter $0$ in front of the word $(\sigma^{-1} (1), \ldots, \sigma^{-1} (n))$ representing the permutation $\sigma^{-1}$ in one-line notation.
\end{lem}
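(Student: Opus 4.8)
The plan is to exhibit the word on the right-hand side as an explicit candidate and check that it satisfies the characterizing property of $\cs(y)$ given in Definition~\ref{defn:standardizations}(2); uniqueness in that definition then forces equality. Concretely, set $\underline{w}:=(w_0,\ldots,w_n)$ with $w_0:=0$ and $w_j:=\sigma^{-1}(j)$ for $1\leq j\leq n$.

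First I would verify that $\underline{w}\in\calW_n$: it has length $n+1$, starts with $0$, and since $\sigma^{-1}$ is a bijection of $\{1,\ldots,n\}$, the letters $0,\sigma^{-1}(1),\ldots,\sigma^{-1}(n)$ are exactly $0,1,\ldots,n$, hence pairwise distinct. Next I would record the key identity for the position statistic: for $1\leq i\leq n$, the equality $w_j=i$ holds precisely when $\sigma^{-1}(j)=i$, i.e.\ when $j=\sigma(i)$, so $\pos_{\underline{w}}(i)=\sigma(i)$. The leading letter $0$ sits at position $0$ and is irrelevant to all the comparisons, which only involve the positions of $1,\ldots,n$.

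With this identity, for any $1\leq i<j\leq n$ we have $\pos_{\underline{w}}(i)>\pos_{\underline{w}}(j)$ if and only if $\sigma(i)>\sigma(j)$, and the latter is equivalent to $y_i>y_j$ by the defining property of $\sigma=\std(y)$ in Definition~\ref{defn:standardizations}(1). Hence $\underline{w}$ meets the condition characterizing $\cs(y)$, and the uniqueness part of Definition~\ref{defn:standardizations}(2) yields $\cs(y)=\underline{w}$, which is the claim. There is no genuine obstacle here; the only point requiring a little care is the index bookkeeping, namely that positions in a $\calW_n$-word run from $0$ to $n$ while $\sigma$ acts on $\{1,\ldots,n\}$, so that the whole statement collapses to the single observation $\pos_{\underline{w}}(i)=\sigma(i)$.
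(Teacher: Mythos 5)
Your proof is correct and matches the paper's treatment: the paper states this lemma as an immediate consequence of Definition~3.3 and gives no written proof, and your argument is exactly the explicit verification one would supply, hinging on the single identity $\pos_{\underline{w}}(i)=\sigma(i)$ together with the uniqueness clause in the definition of $\cs(y)$. Nothing is missing.
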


For $n\geq1$ and $\pi\in\calZ_n$, we define $S_\pi$ to be the set of all $y\in[0,1)^n$ whose cyclic standardization is $\underline{\pi}$:
\[
S_\pi:= \left\{y\in[0,1)^n \; | \; \cs(y)=\underline{\pi} \right\}.
\]
We have the following result about the sets $S_\pi$.
\begin{lem}
\label{lem:simplexvolume}
Let $n\geq1$ and let $\pi\neq \pi' \in \calZ_n$. The sets $S_\pi$ and $S_{\pi'}$ have disjoint interiors and
\[
\vol(S_\pi) = \frac{1}{n!}.
\]
\end{lem}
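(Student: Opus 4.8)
\textbf{Proof plan for Lemma~\ref{lem:simplexvolume}.}

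The plan is to identify each $S_\pi$ explicitly as an open simplex determined by a chain of strict inequalities among the coordinates, and then compute its volume directly. First I would unwind the definition of the cyclic standardization. By Lemma~\ref{lem:standardization}, $\cs(y)=\underline{\pi}$ exactly when the relative order of the coordinates $y_1,\ldots,y_n$ is prescribed by $\underline{\pi}$: namely, writing $\underline{\pi}=(0,w_1,\ldots,w_n)$, the condition $\cs(y)=\underline{\pi}$ says that $y_{w_1}<y_{w_2}<\cdots<y_{w_n}$ (with strict inequalities, since ties would make the cyclic standardization ill-defined on a measure-zero set and in any case $S_\pi$ as defined picks out a specific word). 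Thus $S_\pi$ is, up to relabeling coordinates by the permutation $\pi$ sends to, the standard order simplex $\{0\le z_1<z_2<\cdots<z_n<1\}$ inside $[0,1)^n$.

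Next I would record two consequences. For disjointness of interiors: if $\pi\neq\pi'$, then $\underline{\pi}\neq\underline{\pi'}$ (the map $\pi\mapsto\underline{\pi}$ is a bijection onto $\calW_n$, as noted in the excerpt), so the two total orders on $\{y_1,\ldots,y_n\}$ they prescribe differ; hence there is a pair $i,j$ with $y_i<y_j$ forced on $S_\pi$ and $y_j<y_i$ forced on $S_{\pi'}$. A point in the interior of both would have to satisfy both strict inequalities, which is impossible, so the interiors are disjoint. For the volume: the order simplex $\{0\le z_1<\cdots<z_n<1\}$ is one of the $n!$ congruent pieces into which the hyperplane arrangement $\{z_i=z_j\}$ cuts $[0,1)^n$ (these pieces are permuted transitively and measure-preservingly by coordinate permutations, and they cover the cube up to a measure-zero set), so each has volume $1/n!$; applying the coordinate permutation relating $S_\pi$ to this standard simplex preserves volume, giving $\vol(S_\pi)=1/n!$.

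I would also note the sanity check that the $n!$ sets $S_\pi$ (for $\pi$ ranging over $\calZ_n$, which has cardinality $n!$) have disjoint interiors, each of volume $1/n!$, and their union is all of $[0,1)^n$ minus the coordinate-equality locus, which has measure zero; this is consistent and in fact is the observation that makes Lemma~\ref{lem:standardization} the natural tool. I do not expect any genuine obstacle here: the only point requiring a little care is making precise the passage from ``$\cs(y)=\underline{\pi}$'' to the explicit chain of strict inequalities, i.e.\ checking that the defining condition in Definition~\ref{defn:standardizations}(2) unpacks to $y_{w_1}<\cdots<y_{w_n}$, and confirming that the boundary behaviour (coordinates tied or equal to an integer) sits in a measure-zero set so that it does not affect either the volume computation or the interior-disjointness claim. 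Once that translation is in hand, both assertions follow immediately from the elementary geometry of order simplices.
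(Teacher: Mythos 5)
Your proposal is correct and follows essentially the same route as the paper: unpack $\cs(y)=\underline{\pi}$ into a chain of inequalities identifying $S_\pi$ with an order simplex, deduce disjointness of interiors from conflicting strict inequalities, and obtain $\vol(S_\pi)=1/n!$ from the fact that the $n!$ order simplices are permuted transitively by coordinate permutations and tile $[0,1)^n$. The one imprecision is that ties do not make $\cs$ ill-defined --- Definition~\ref{defn:standardizations}(2) resolves them by index, so $S_\pi$ is a half-open simplex cut out by a mix of strict and weak inequalities and the $S_\pi$ partition $[0,1)^n$ exactly --- but, as you note, this discrepancy is confined to a measure-zero set and affects neither claim of the lemma.
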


\begin{proof}
It is not hard to see that for $\pi\in\calZ_n$, the set $S_\pi$ is defined by $n+1$ inequalities. For example, if $\pi$ is such that $\underline{\pi}= 0 4 1 3 5 2$, then $S_\pi$ is defined by the inequalities
\[
0 \leq y_4 < y_1 \leq y_3 \leq y_5 < y_2 <1.
\]
The interior $\mathring{S}_\pi$ of $S_\pi$ is defined by making strict all the inequalities used to define $S_\pi$. It follows that if $\pi\neq \pi' \in \calZ_n$, then $\mathring{S}_\pi\cap\mathring{S}_{\pi'}=\emptyset$. Furthermore, by symmetry, all the $\mathring{S}_\pi$ where $\pi$ ranges over $\calZ_n$ have the same volume. Since
\[
[0,1)^n=\bigsqcup_{\pi\in\calZ_n} S_\pi,
\]
we deduce that $\vol(S_\pi)=\vol(\mathring{S}_\pi)=\frac{1}{n!}$ for $\pi\in\calZ_n$.
\end{proof}

For $n\geq1$ we define the sets
\[
X_n:=\big\{ (x_1,\ldots,x_n) \in (0,1)^n \mid \forall 1\leq i \leq j \leq n,\ 
x_i + \cdots + x_j \notin \Z \big\}
\]
and
\[
Y_n:=\big\{ (y_1,\ldots,y_n) \in (0,1)^n \mid \forall 1 \leq i < j \leq n,\ 
y_i \neq y_j \big\}.
\]
Both $X_n$ and $Y_n$ have full Lebesgue measure as subsets of $[0,1]^n$. Furthermore, $F_n$ maps $X_n$ to $Y_n$.

\begin{prop}
\label{prop:transfer}
For $n\geq1$ and $I\subset P_n$, we have
\begin{equation}
\label{eq:transferI}
F_n(B_{I,n}\cap X_n) = \bigsqcup_{\pi\in A_{I,n}} (S_\pi \cap Y_n).
\end{equation}
For $n\geq0$ and $\underline{s}\in\left\{+,-\right\}^n$, we have
\begin{equation}
\label{eq:transfers}
F_{n+1}(\widetilde{B}_{\underline{s}} \cap X_{n+1}) = \bigsqcup_{\pi\in \widetilde{A}_{\underline{s}}} (S_\pi \cap Y_{n+1}).
\end{equation}
\end{prop}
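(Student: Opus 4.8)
The plan is to reduce both identities to a single combinatorial statement and then prove that statement by tracking partial sums on the circle $\R/\Z$. Two preliminaries are needed. First, $F_n$ restricts to a \emph{bijection} from $X_n$ onto $Y_n$: indeed $F_n$ maps $X_n$ into $Y_n$, and the explicit inverse $G_n$ from the proof of Lemma~\ref{lem:measurepreserving} maps $Y_n$ into $X_n$, both facts being immediate from the non-integrality conditions (``$y_i\neq 0$'' and ``$y_i\neq y_j$ for $i<j$'' correspond exactly to ``$x_1+\dots+x_i\notin\Z$'' and ``$x_{i+1}+\dots+x_j\notin\Z$''). Second, $\cs$ is defined at every point of $Y_n$ and $Y_n=\bigsqcup_{\pi\in\calZ_n}(S_\pi\cap Y_n)$, the pieces being disjoint because distinct $\pi$ have distinct words $\underline{\pi}$ (cf.\ Lemma~\ref{lem:simplexvolume}). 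Granting these, \eqref{eq:transferI} is equivalent to the assertion that for every $x\in X_n$, writing $y=F_n(x)$ and letting $\pi\in\calZ_n$ be determined by $\underline{\pi}=\cs(y)$, one has $x\in B_{I,n}\iff\pi\in A_{I,n}$; likewise \eqref{eq:transfers} reduces to the analogous equivalence $x\in\widetilde{B}_{\underline s}\iff\pi\in\widetilde{A}_{\underline s}$ in dimension $n+1$.

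The heart of the matter is the following lemma. Let $x\in X_n$, set $p_0:=0$ and $p_\ell:=x_1+\dots+x_\ell$ for $1\le\ell\le n$, so that $y_\ell=p_\ell\bmod1$ for $0\le\ell\le n$ (in particular $y_0=0$). Then for all $0\le i<j\le n$, the tuple $(i,i+1,\dots,j)$ is a $\pi$-chain if and only if $x_{i+1}+\dots+x_j\le1$. By Definition~\ref{defn:standardizations} (see also Lemma~\ref{lem:standardization}) and the definition of $\underline{\pi}$, the word $\underline{\pi}=\cs(y)$ lists $0,1,\dots,n$ in increasing order of $y$-value with $0$ first, so $\pi$ is the clockwise cyclic order on $\R/\Z$ of the points $y_0,\dots,y_n$, and $(i,a,b)\in\pi$ iff $(y_a-y_i)\bmod1<(y_b-y_i)\bmod1$. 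Setting $t_k:=(p_{i+k}-p_i)\bmod1$ for $0\le k\le j-i$, unwinding the definition of a $\pi$-chain gives: $(i,i+1,\dots,j)$ is a $\pi$-chain iff $t_1<t_2<\dots<t_{j-i}$. Now $x_{i+1}+\dots+x_j=p_j-p_i$ is not an integer since $x\in X_n$. If $p_j-p_i\le1$, then $p_j-p_i<1$, so $0<p_{i+1}-p_i<\dots<p_j-p_i<1$, no reduction modulo $1$ occurs, $t_k=p_{i+k}-p_i$, and the $t_k$ strictly increase, so we have a chain. If $p_j-p_i>1$, pick the least $k^\ast\ge1$ with $p_{i+k^\ast}-p_i>1$; then $2\le k^\ast\le j-i$ (as $p_{i+1}-p_i=x_{i+1}<1$), and $1<p_{i+k^\ast}-p_i=(p_{i+k^\ast-1}-p_i)+x_{i+k^\ast}<2$, so $t_{k^\ast}=p_{i+k^\ast}-p_i-1$ whereas $t_{k^\ast-1}=p_{i+k^\ast-1}-p_i$ by minimality; hence $t_{k^\ast}-t_{k^\ast-1}=x_{i+k^\ast}-1<0$, the $t_k$ are not increasing, and we do not have a chain.

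Granting the lemma, \eqref{eq:transferI} follows: $x\in B_{I,n}$ says precisely that $x_{i+1}+\dots+x_j\le1$ for every $(i,j)\in I$, equivalently that $(i,i+1,\dots,j)$ is a $\pi$-chain for every $(i,j)\in I$, equivalently that $\pi\in A_{I,n}$. For \eqref{eq:transfers} we apply the lemma in dimension $n+1$ to the pairs $(i-1,i+1)$, whose associated chain $(i-1,i,i+1)$ has length $3$: if $s_i=+$, then $x_i+x_{i+1}\le1$ iff $(i-1,i,i+1)\in\pi$; if $s_i=-$, then $x_i+x_{i+1}\ge1$ iff $x_i+x_{i+1}>1$ iff $(i-1,i,i+1)$ is not a $\pi$-chain iff $(i-1,i,i+1)\notin\pi$, which by totality and asymmetry of $\pi$ means $(i+1,i,i-1)\in\pi$. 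Hence $x\in\widetilde{B}_{\underline s}\iff\pi\in\widetilde{A}_{\underline s}$, and \eqref{eq:transfers} follows.

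I expect the main obstacle to be the bookkeeping inside the core lemma, above all the second case, where one must pinpoint the consecutive triple that fails once a consecutive sum exceeds $1$, and must keep the lifts $p_\ell$ and the reduced coordinates $y_\ell$ carefully aligned (the boundary case $i=0$ being absorbed by $y_0=p_0=0$). A subsidiary point requiring care is verifying that $F_n$ is an honest bijection $X_n\to Y_n$, not merely measure-preserving; this is precisely what upgrades the measure-theoretic picture of this section to the exact set equalities \eqref{eq:transferI}--\eqref{eq:transfers}.
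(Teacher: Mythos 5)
Your proposal is correct and follows essentially the same route as the paper: the paper's proof also reduces to the bijectivity of $F_n:X_n\to Y_n$ and the observation that each $x_i$ is the clockwise gap between $y_{i-1}$ and $y_i$ on the circle $[0,1]/\!\sim$, so that $x_{i+1}+\cdots+x_j<1$ holds iff $(i,i+1,\ldots,j)$ is a $\pi$-chain, with the length-$3$ case handling $\widetilde{B}_{\underline{s}}$. Your core lemma with the lifts $p_\ell$ and the index $k^\ast$ is just a fully written-out version of the step the paper dispatches with ``observe that.''
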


\begin{proof}
Let $\calC$ denote the circle obtained by quotienting out the interval $[0,1]$ by the relation $0 \sim 1$. The circle $\calC$ comes naturally equipped with the standard cyclic order. Fix $n\geq1$ and $(x_1,\ldots,x_n)\in X_n$. We set $(y_1,\ldots,y_n)=F(x_1,\ldots,x_n)\in Y_n$ and we let $\pi$ be the element of $\calZ_n$ such that $\cs(y)=\underline{\pi}$. Observe that each variable $x_i$ measures the gap between $y_{i-1}$ and $y_i$ when turning in the clockwise direction on $\calC$ (where by convention $y_0=0$). Thus, for $1 \leq i < j \leq n$, we have $x_{i+1}+\cdots+ x_j <1$ if and only if $(y_i,y_{i+1},\ldots,y_j)$ forms a $\calC$-chain. Furthermore, $(y_i,y_{i+1},\ldots,y_j)$ forms a $\calC$-chain if and only if $(i,i+1,\ldots,j)$ forms a $\pi$-chain. It follows immediately that for $I\subset P_n$, $x\in B_{I,n}$ if and only if $\pi\in A_{I,n}$. Equality~\eqref{eq:transferI} follows from the fact that $F_n$ is a bijection from $X_n$ to $Y_n$.

\begin{figure}[htbp]
\includegraphics[width=3in]{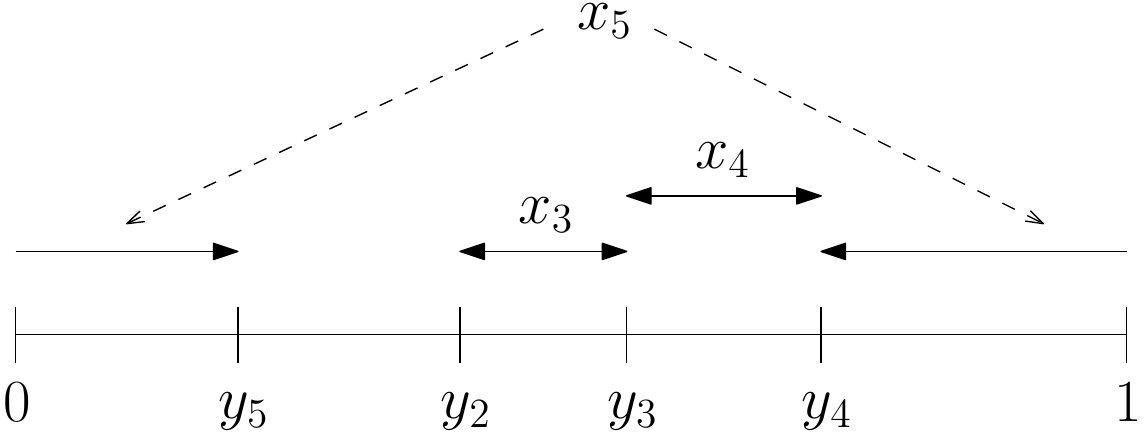}
\caption{The inequality $x_3+x_4+x_5<1$ is equivalent to the fact that $(y_2,y_3,y_4,y_5)$ forms a chain in $[0,1]/\sim$ equipped with its standard cyclic order.}
\label{fig:equivinequalitychain}
\end{figure}

Equality~\eqref{eq:transfers} follows similarly, by observing that the knowledge of the sign of $x_i+x_{i+1}-1$ is equivalent to the knowledge of the relative positions of $y_{i-1}$, $y_i$ and $y_{i+1}$ on $\calC$.
\end{proof}

Theorems~\ref{thm:consecutivesums} and~\ref{thm:cyclicdescents} follow immediately from combining Lemma~\ref{lem:measurepreserving}, Lemma~\ref{lem:simplexvolume} and Proposition~\ref{prop:transfer}. In order to compute the volumes of the polytopes $B_{I,n}$ and $\widetilde{B}_{\underline{s}}$ we were able to work up to sets of measure zero, discarding the complementary of the sets $X_n$ and $Y_n$. However we will need to take these sets of measure zero into account to compute the $h^*$-polynomial of $B_{I,n}$ in the next section.

\section{\texorpdfstring{The Ehrhart $h^*$-polynomial}{The Ehrhart h*-polynomial}}
\label{sec:Ehrhart}

In this section, we first prove Theorem~\ref{thm:Ehrhartconsecutivesums} about the combinatorial interpretation of the $h^*$-polynomial of $B_{I,n}$ in terms of descents of elements in $A_{I,n}$. Then we prove Theorem~\ref{thm:palindromicity} about the palindromicity of the $h^*$-polynomials of $\hat{B}_{k,n}$.

The first step of the proof of Theorem~\ref{thm:Ehrhartconsecutivesums} consists in relating our polytope $B_{I,n}$ with its ``half-open'' analog $B'_{I,n}$, which we now define.

\begin{defn}
To every subset $I\subset P_n$, we associate the polytope $B'_{I,n}$ defined as the set of all $(x_1,\ldots,x_{n})\in[0,1)^n$ such that for $(i,j)\in I$, we have $x_{i+1} + \cdots + x_{j-1} + x_j< 1$.
\end{defn}
Though $B'_{I,n}$ is not a polytope (it is obtained from the polytope $B_{I,n}$ by removing some faces), we can define its Ehrhart polynomial 
in the usual way by
\[
\forall t\in\N, E(B'_{I,n} , t ) :=  \# \big( t\cdot B'_{I,n} \cap \mathbb{Z}^n \big)
\]
and its $h^*$-polynomial by
\[
E^*(B'_{I,n} , z ) =  (1-z)^{n+1} \sum_{t=1}^{\infty} E(B'_{I,n} , t ) z^t.
\]
Note that there is no general result to guarantee that $E^*(B'_{I,n} , z )$ is a polynomial with nonnegative coefficients. However, in our setting we do have the following result.

\begin{lem}
If $t \in \N$, then $E(B'_{I,n} , t ) = E(B_{I,n}, t-1)$ and
\begin{equation}
\label{eq:halfopenEhrhart}
  E^*( B'_{I,n} , z ) = z\cdot E^*( B_{I,n} , z ).
\end{equation}
\end{lem}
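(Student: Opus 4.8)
The plan is to compare the lattice points of dilates of the half-open polytope $B'_{I,n}$ with those of the closed polytope $B_{I,n}$ by a ``shift by the all-ones vector'' argument. Concretely, I would show that for every integer $t\geq 1$ the map $v\mapsto v-\mathbf{1}$, where $\mathbf{1}=(1,\ldots,1)$, is a bijection from $(t\cdot B'_{I,n})\cap\Z^n$ onto $((t-1)\cdot B_{I,n})\cap\Z^n$. The point is that $t\cdot B'_{I,n}$ consists of the $x\in[0,t)^n$ satisfying $x_{i+1}+\cdots+x_j<t$ for $(i,j)\in I$, so for an integer vector $x$ the strict inequalities $x_q\le t-1$ (from $x_q<t$) and $x_{i+1}+\cdots+x_j\le t-1$ are automatic sharpenings; subtracting $\mathbf{1}$ from each coordinate then lands exactly in the set $\{x'\in[0,t-1]^n : x'_{i+1}+\cdots+x'_j\le t-1\}=(t-1)\cdot B_{I,n}$, where I have to be slightly careful that the number of coordinates appearing in the $(i,j)$ inequality is $j-i$, so the right-hand side drops by $j-i\le t-1$ only when... — actually the cleanest formulation avoids this: an integer point $x$ lies in $t\cdot B'_{I,n}$ iff $0\le x_q\le t-1$ for all $q$ and $x_{i+1}+\cdots+x_j\le t-1$ for all $(i,j)\in I$ (using integrality to turn $<t$ into $\le t-1$), and this is visibly the same as saying $x-\mathbf 1\in\ldots$ no — the correct move is to note that this description of integer points of $t\cdot B'_{I,n}$ is literally the description of integer points of $t\cdot B_{I,n}$ with $t$ replaced by $t-1$ after translating by $\mathbf 1$. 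This gives $E(B'_{I,n},t)=E(B_{I,n},t-1)$ for all $t\in\N$.

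From this identity the generating-function statement is a short formal manipulation. Writing $E(B_{I,n},t-1)$ for $E(B'_{I,n},t)$ and shifting the summation index,
\[
\sum_{t=1}^\infty E(B'_{I,n},t)\,z^t=\sum_{t=1}^\infty E(B_{I,n},t-1)\,z^t=z\sum_{t=0}^\infty E(B_{I,n},t)\,z^t,
\]
so multiplying by $(1-z)^{n+1}$ gives $E^*(B'_{I,n},z)=z\cdot E^*(B_{I,n},z)$, which is \eqref{eq:halfopenEhrhart}. In particular this also re-proves that $E^*(B'_{I,n},z)$ is a polynomial with nonnegative integer coefficients, and that it is divisible by $z$, a fact that will be used in the sequel.

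The only genuinely delicate point is the bijection of lattice points, and within that the handling of the boundary: I must check that an integer vector satisfies the closed inequalities of $t\cdot B_{I,n}$ (with weak inequality $\le$ and the coordinate bounds $0\le x_q\le t$) after the shift precisely when it satisfied the half-open inequalities of $(t+1)\cdot B'_{I,n}$ (strict inequality $<$, coordinate bounds $0\le x_q<t+1$) before the shift — and that no integer point is lost or double-counted at the facets $x_q=0$. The asymmetry between ``$x_q\ge 0$'' being kept in both and ``$x_q\le t$'' versus ``$x_q<t+1$'' being matched up is exactly what forces the shift by $\mathbf 1$ rather than a symmetric rescaling, so I would lay out the equivalence of the two inequality systems coordinate by coordinate and clause by clause. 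This is elementary but deserves to be written carefully; everything after it is bookkeeping with power series.
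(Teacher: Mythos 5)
There is a genuine problem with the core step, even though the identity you are aiming for is true. The right observation --- which you in fact write down halfway through --- is that for an integer point $x$, the defining conditions of $t\cdot B'_{I,n}$ (namely $0\le x_q<t$ for all $q$ and $x_{i+1}+\cdots+x_j<t$ for $(i,j)\in I$) are equivalent, by integrality alone, to $0\le x_q\le t-1$ and $x_{i+1}+\cdots+x_j\le t-1$, and these are \emph{verbatim} the defining conditions of $(t-1)\cdot B_{I,n}$. Hence $(t\cdot B'_{I,n})\cap\Z^n=\bigl((t-1)\cdot B_{I,n}\bigr)\cap\Z^n$ as sets, with no translation whatsoever; this equality of lattice-point sets is exactly the paper's (one-line) proof. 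Your proposal, however, ultimately commits to the map $v\mapsto v-\mathbf{1}$ and asserts that the shape of the inequalities ``forces the shift by $\mathbf{1}$.'' That map is not a bijection between the two sets: the origin lies in $t\cdot B'_{I,n}$ for every $t\ge 1$ (all coordinates are $0$ and every sum is $0<t$), but $-\mathbf{1}$ has negative coordinates and so lies in no dilate of $B_{I,n}$. More generally, every lattice point with a zero coordinate is pushed out of the nonnegative orthant by your shift, so the shifted map loses points. There is also no asymmetry to repair: both $B'_{I,n}$ and $B_{I,n}$ impose the same closed lower bounds $x_q\ge 0$, and only the upper constraints pass from strict to weak, which for integers costs exactly one unit on the right-hand side --- a change of the dilation parameter $t\mapsto t-1$, not a translation of the points.

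If you delete the shift and keep only the sentence in which you convert $<t$ into $\le t-1$ by integrality, your argument collapses to the paper's proof. The subsequent generating-function manipulation (shifting the summation index and multiplying by $(1-z)^{n+1}$) is correct and is the same formal step the paper intends by ``the second equality follows from the first one.''
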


\begin{proof}
The first equality follows from the fact that for $t \in\N$,
\[
 \big( t \cdot B'_{I,n} \big) \cap \mathbb{Z}^n = \big( (t-1) \cdot B_{I,n} \big) \cap \mathbb{Z}^n.
\]
To see why this is true, it suffices to notice that the condition $x_{i+1}+x_{i+2}+ \cdots + x_{j} < t$ is equivalent to $x_{i+1}+x_{i+2} + \cdots + x_{j} \leq t-1$ when $(x_i)_{1\leq i\leq n} \in \mathbb{Z}^n$. The second equality follows from the first one.
\end{proof}

It remains to show that $E^*( B'_{I,n} , z )$ is the descent generating function of $A_{I,n}$ (up to this factor $z$). The first step is to understand the behavior of the transfer map on some of the measure zero sets that were discarded in equality~\eqref{eq:transferI}.

\begin{prop}
\label{prop:transferhalfopen}
For $n\geq1$ and $I\subset P_n$, we have
\[
F_n(B'_{I,n}) = \bigsqcup_{\pi\in A_{I,n}} S_\pi.
\]
\end{prop}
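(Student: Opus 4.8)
The plan is to upgrade the argument of Proposition~\ref{prop:transfer} from "outside a set of measure zero" to "everywhere on the half-open polytope." First I would take an arbitrary point $x=(x_1,\ldots,x_n)\in[0,1)^n$ and set $y=(y_1,\ldots,y_n)=F_n(x)$; note that $F_n$ maps $[0,1)^n$ bijectively onto itself by Lemma~\ref{lem:measurepreserving}, so we do not need to discard anything for bijectivity. Let $\pi\in\calZ_n$ be the total cyclic order with $\cs(y)=\underline\pi$ (this is well-defined for every $y\in[0,1)^n$, even when some coordinates coincide, because $\cs$ is defined on all of $[0,1)^n$). The geometric picture is the same as before: placing $y_0=0,y_1,\ldots,y_n$ on the circle $\calC=[0,1]/(0\sim1)$, the coordinate $x_i$ is exactly the clockwise gap from $y_{i-1}$ to $y_i$, so $x_{i+1}+\cdots+x_j$ equals the total clockwise length from $y_i$ to $y_j$. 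The key point to check carefully is the following equivalence for $1\le i<j\le n$:
\[
x_{i+1}+\cdots+x_j<1 \iff (i,i+1,\ldots,j)\text{ is a }\pi\text{-chain}.
\]

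The main obstacle — the only place where the half-open (strict-inequality, possibly-equal-coordinates) setting differs from the open one — is getting this equivalence right when some of the $y_i$ coincide or when $y_i=0=y_0$. So the heart of the proof is a careful case analysis of the boundary behaviour. If $x_{i+1}+\cdots+x_j<1$, the arc from $y_i$ to $y_j$ has length strictly less than $1$, hence does not wrap all the way around, and every $y_\ell$ with $i<\ell<j$ lies on the clockwise arc from $y_i$ to $y_j$; in terms of $\cs$ this says $\pos_{\underline\pi}$ of $i,i+1,\ldots,j$ appear in this cyclic order, i.e. $(i,i+1,\ldots,j)$ is a $\pi$-chain. (Here one must use the precise definition of $\cs$ from Definition~\ref{defn:standardizations} to handle ties: when $y_\ell=y_{\ell'}$ the smaller index is placed earlier, and one checks this tie-breaking is consistent with the chain condition, since a strict inequality $x_{i+1}+\cdots+x_j<1$ forces the cumulative partial sums to be strictly increasing around the circle in a way that respects the index order.) Conversely, if $x_{i+1}+\cdots+x_j\ge1$, then the clockwise arc from $y_i$ to $y_j$ has length $\ge1$, so it contains $y_0=0$; consequently (using again the tie-breaking rule, and the fact that $x\in[0,1)^n$ forces each $x_\ell<1$ so no single step wraps around) some intermediate $y_\ell$, or the point $0$ itself together with the cyclic structure, breaks the chain, giving that $(i,i+1,\ldots,j)$ is not a $\pi$-chain.

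Granting this equivalence, the rest is immediate: by the definition of $B'_{I,n}$ and $A_{I,n}$, we get $x\in B'_{I,n}$ if and only if $\pi\in A_{I,n}$, and also $(i,i+1,\ldots,j)$ being a $\pi$-chain is equivalent to it being a $\calC$-chain for the points $y_i,\ldots,y_j$, exactly as in Proposition~\ref{prop:transfer}. Since $F_n$ is a bijection from $[0,1)^n$ to itself and $S_\pi=\{y\in[0,1)^n\mid\cs(y)=\underline\pi\}$, the preimage $F_n^{-1}(S_\pi)$ is precisely the set of $x\in[0,1)^n$ whose associated total cyclic order is $\pi$, and these sets partition $[0,1)^n$ as $\pi$ ranges over $\calZ_n$. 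Intersecting with the condition "$\pi\in A_{I,n}$" yields
\[
B'_{I,n}=\bigsqcup_{\pi\in A_{I,n}}F_n^{-1}(S_\pi),
\]
and applying $F_n$ gives $F_n(B'_{I,n})=\bigsqcup_{\pi\in A_{I,n}}S_\pi$, as claimed. I expect the write-up to be short once the tie-breaking bookkeeping in the chain equivalence is pinned down; that bookkeeping is the step deserving the most care.
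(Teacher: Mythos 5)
Your overall strategy --- proving the chain/inequality equivalence directly for every $x\in[0,1)^n$, ties included, rather than only generically --- is a genuinely different route from the paper's, and it can be made to work, but the one step you yourself flag as ``deserving the most care'' is exactly where your sketch breaks down. The forward direction is essentially fine, although your justification is off: the partial sums $x_{i+1}+\cdots+x_\ell$ are only \emph{weakly} increasing (some $x_\ell$ may vanish), and what saves the chain condition in case of ties is the index tie-break built into Definition~\ref{defn:standardizations}, not any strict monotonicity. The converse is the real gap. Your stated mechanism, ``the clockwise arc from $y_i$ to $y_j$ has length $\geq 1$, so it contains $y_0=0$, hence the chain breaks,'' does not hold up: an arc of $\calC$ always has length $<1$ (what is $\geq 1$ is the unrolled sum), and containing $0$ is neither necessary nor sufficient for the chain to fail. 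For instance with $n=3$ and $x=(0.5,0.3,0.4)$ one gets $y=(0.5,0.8,0.2)$ and $\underline{\pi}=(0,3,1,2)$: the clockwise arc from $y_1$ to $y_3$ passes through $0$, yet $x_2+x_3=0.7<1$ and $(1,2,3)$ is a $\pi$-chain. The correct argument must track the integer parts $k_\ell=\lfloor y_i+x_{i+1}+\cdots+x_\ell\rfloor$ and show that if $(i,\ldots,j)$ were a $\pi$-chain, then at the first index where $k_\ell$ increments one would need a single step $x_\ell\geq 1$ (and that $k_j\leq 1$ with $y_j<y_i$ in the wrapped case), contradicting $x\in[0,1)^n$. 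As written, this half of your key equivalence is unproved, and the heuristic you offer for it would not lead to a proof.

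For comparison, the paper avoids all of this bookkeeping by perturbation: it replaces $x$ by $x_\epsilon=(x_1+\epsilon,\ldots,x_n+\epsilon)$, so that $F_n(x_\epsilon)=(y_1+\epsilon,y_2+2\epsilon,\ldots,y_n+n\epsilon)$ lies in $Y_n$ and, for $\epsilon$ small, has the \emph{same} cyclic standardization as $y$ --- precisely because the shift $y_\ell\mapsto y_\ell+\ell\epsilon$ breaks ties in favour of the smaller index, matching the tie-break in the definition of $\cs$. Strict inequalities ($x\in B'_{I,n}$) survive the perturbation into $B_{I,n}\cap X_n$, and violated inequalities survive into $X_n\setminus B_{I,n}$, so Proposition~\ref{prop:transfer} applies to $x_\epsilon$ and transfers back to $x$. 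If you wish to keep your direct route you must supply the lap-counting argument above; otherwise the perturbation is the cheaper fix.
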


The difference between the above proposition and Proposition~\ref{prop:transfer} is that now we need to take into account the cases when $x$ is not in $X_n$, wherein two coordinates of $F_n(x)$ become equal. The idea of the proof is to deal with such potential equalities by desingularizing, i.e. adding a small quantity to each coordinate of $x$ to make the coordinates of $y$ all distinct and to show that this desingularization does not change the cyclic standardization.

\begin{proof}
Let $x=(x_1,\ldots,x_n)\in[0,1)^n$ and write $y=F_n(x)$. For $\epsilon>0$, set $x_\epsilon=(x_1+\epsilon,\ldots,x_n+\epsilon)$. There exists $\epsilon_0>0$ such that for $0<\epsilon \leq \epsilon_0$, we have $x_\epsilon \in X_n$ and
\[
F_n(x_\epsilon)= (y_1 + \epsilon, y_2 + 2\epsilon, \dots, y_n + n \epsilon) \in Y_n.
\]
Moreover, there exists $0<\epsilon_1 \leq \epsilon_0$ such that for $0 < \epsilon \leq \epsilon_1$ and for $1\leq i<j \leq n$, we have $y_i > y_j$ if and only if $y_i + i \epsilon > y_j + j \epsilon$. Putting everything together, we obtain that $\cs(F_n(x))=\cs(F_n(x_\epsilon))$ whenever $\epsilon>0$ is small enough.

If $x\in B'_{I,n}$, then $x_\epsilon \in B_{I,n}\cap X_n$ for $\epsilon>0$ small enough. Hence, by Proposition~\ref{prop:transfer}, we have
\[
F_n(x_\epsilon)\in \bigsqcup_{\pi\in A_{I,n}} (S_\pi \cap Y_n) \subset  \bigsqcup_{\pi\in A_{I,n}} S_\pi,
\]
and by the previous paragraph, we obtain that $F_n(x) \in \sqcup_{\pi\in A_{I,n}} S_\pi$.

Conversely, assume $x\notin B'_{I,n}$. Then there exists $(i,j)\in I$ such that
\begin{equation}
\label{eq:ineqij}
x_{i+1}+\cdots+x_j >1.
\end{equation}
The same inequality involving the coordinates $i+1$ to $j$ as~\eqref{eq:ineqij} holds for every $x_\epsilon$ with $\epsilon>0$. Therefore, for $\epsilon>0$ small enough, we have $x_\epsilon \in X_n \setminus B_{I,n}$. Taking $\epsilon$ small enough so that $x_\epsilon \in X_n$ and $\cs(F_n(x))=\cs(F_n(x_\epsilon))$, we deduce from Proposition~\ref{prop:transfer} that $F_n(x_\epsilon)\in S_\pi \cap Y_n$ for some $\pi \notin A_{I,n}$, and hence $F_n(x) \notin \sqcup_{\pi\in A_{I,n}} S_\pi$.
\end{proof}

It follows from Proposition~\ref{prop:transferhalfopen} that
\[
E^*(F_n(B'_{I,n}),z)=\sum_{\pi\in A_{I,n}} E^*(S_\pi,z).
\]
Here $F_n(B'_{I,n})$ is not a lattice polytope, but its $h^*$-polynomial is defined in the same way as that of $B'_{I,n}$. Note also that the transfer map preserves integrality. More precisely, for $v \in [0,1)^n$ and an integer $t\geq1$, we have $ t\cdot v \in \mathbb{Z}^n$ if and only if $ t\cdot F_n(v) \in \mathbb{Z}^n$. Thus
\[
E^*(B'_{I,n},z)=E^*(F_n(B'_{I,n}),z).
\]
To conclude the proof of Theorem~\ref{thm:Ehrhartconsecutivesums}, it suffices to know the $h^*$-poly-nomial of $S_\pi$. This is the content of the following lemma. 
Recall that for $\underline{w} \in \calW_n$, $\des(\underline{w})$ counts the number of descents of $\underline{w}$.

\begin{lem}
\label{lem:HJV}
Let $n\geq 1$ and let $\pi\in\calZ_n$. Then
\begin{equation}
E^*(S_\pi , z ) = z^{\des(\underline{\pi})+1}.
\end{equation}
\end{lem}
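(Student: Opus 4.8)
The plan is to compute the Ehrhart function of $S_\pi$ by hand and read off its $h^*$-polynomial.

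First I would make the inequalities defining $S_\pi$ explicit. Writing $\underline{\pi}=(w_0,w_1,\ldots,w_n)$ with $w_0=0$, Definition~\ref{defn:standardizations} shows (exactly as in the proof of Lemma~\ref{lem:simplexvolume}) that $S_\pi$ is the set of $(y_1,\ldots,y_n)\in[0,1)^n$ satisfying
\[
0\le y_{w_1}\ R_1\ y_{w_2}\ R_2\ \cdots\ R_{n-1}\ y_{w_n}<1,
\]
where $R_k$ is the strict inequality ``$<$'' if $w_k>w_{k+1}$, i.e.\ if $\underline{\pi}$ has a descent at position $k$, and the weak inequality ``$\le$'' otherwise. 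Since $i\mapsto\pos_{\underline{\pi}}(i)$ is a permutation of $\{1,\ldots,n\}$, permuting the coordinates accordingly is a lattice-preserving linear automorphism of $\R^n$ that carries $S_\pi$ onto
\[
T:=\bigl\{(z_1,\ldots,z_n)\in[0,1)^n : 0\le z_1\ R_1\ z_2\ R_2\ \cdots\ R_{n-1}\ z_n<1\bigr\}.
\]
Such a map preserves lattice-point counts in every dilate, so $E^*(S_\pi,z)=E^*(T,z)$ (the $h^*$-polynomial being understood in the same sense as for $B'_{I,n}$), and it remains to prove $E^*(T,z)=z^{d+1}$, where $d:=\des(\underline{\pi})$ is the number of strict inequalities among $R_1,\ldots,R_{n-1}$.

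Next I would count $t\cdot T\cap\Z^n$ for a positive integer $t$ by the standard ``destrictification'' shift. A lattice point of $t\cdot T$ is an integer sequence $0\le a_1\ R_1\ a_2\ \cdots\ R_{n-1}\ a_n\le t-1$, the last constraint becoming $a_n\le t-1$ because $a_n\in\Z$ and $a_n<t$. Setting $c_k:=\#\{i<k : R_i\text{ is strict}\}$ and $b_k:=a_k-c_k$ gives a bijection onto the weakly increasing integer sequences $0\le b_1\le b_2\le\cdots\le b_n\le t-1-d$; hence $E(T,t)=\binom{t-1-d+n}{n}$ for $t\ge d+1$ and $E(T,t)=0$ for $1\le t\le d$. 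Summing, $\sum_{t\ge1}E(T,t)z^t=z^{d+1}\sum_{s\ge0}\binom{s+n}{n}z^s=z^{d+1}/(1-z)^{n+1}$, so $E^*(T,z)=(1-z)^{n+1}\cdot z^{d+1}/(1-z)^{n+1}=z^{d+1}=z^{\des(\underline{\pi})+1}$. (Alternatively one may invoke the general fact that deleting $j$ facets from a unimodular simplex multiplies its $h^*$-polynomial by $z^{j}$; here one deletes the $d$ internal facets together with the facet $z_n=1$, but not $z_1=0$.)

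The computation itself is routine; the one point requiring care is the bookkeeping relating descents of $\underline{\pi}$ to the strict inequalities cutting out $S_\pi$ — in particular that the outer constraint $y_{w_n}<1$ is strict while $0\le y_{w_1}$ is weak, which is precisely what produces the exponent $\des(\underline{\pi})+1$ rather than $\des(\underline{\pi})$.
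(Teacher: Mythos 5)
Your proof is correct, and it takes a genuinely different route from the paper. The paper's proof is a reduction: it observes via Lemma~\ref{lem:standardization} that $S_\pi$ equals the standardization cell $T_\sigma=\{y:\std(y)=\sigma\}$ for $\sigma=\std$ of any point of $S_\pi$, that $\des(\underline{\pi})=\des(\sigma^{-1})$, and then cites \cite[Lemma 4]{HJV16} as a black box for $E^*(T_\sigma,z)=z^{\des(\sigma^{-1})+1}$. You instead compute the Ehrhart function of the half-open order cell from scratch: you unwind the definition of $\cs$ to write $S_\pi$ as a chain of weak/strict inequalities (with a strict relation exactly at each descent, plus the strict outer constraint $y_{w_n}<1$), reduce to a standard coordinate order by a permutation of coordinates, and count lattice points in the dilates by the shift $b_k=a_k-c_k$ that converts strict inequalities to weak ones, landing on $E(T,t)=\binom{t-1-d+n}{n}$ and hence $E^*=z^{d+1}$. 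Your computation is consistent with the paper's convention that the $h^*$-series of these half-open sets is summed from $t=1$, and your bookkeeping of which inequalities are strict matches the example given in the proof of Lemma~\ref{lem:simplexvolume}; the one small point you use implicitly is that position $0$ is never a descent of $\underline{\pi}$ (since $w_0=0$), so that $\des(\underline{\pi})$ really does equal the number of strict relations among $R_1,\dots,R_{n-1}$. The trade-off is clear: the paper's argument is shorter but outsources the key computation to \cite{HJV16}, while yours is self-contained and in effect reproves that cited lemma; either is acceptable here.
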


\begin{proof}
By Lemma~\ref{lem:standardization}, all the elements in $S_\pi$ have the same standardization, which we denote by $\sigma\in\calS_n$. Moreover, if we define
\[
T_\sigma:=\{ y\in[0,1)^n \mid \std(y)=\sigma \},
\]
then Lemma~\ref{lem:standardization} implies that $S_\pi=T_\sigma$. It also implies that the word $\underline{\pi}$ has the same number of descents as the word $(\sigma^{-1}(1),\ldots,\sigma^{-1}(n))$, which is just the descent number $\des(\sigma^{-1})$ of the permutation $\sigma^{-1}$. 
We then appeal to~\cite[Lemma 4]{HJV16}, which says exactly that
\[
E^*(T_{\sigma} , z ) = z^{\des(\sigma^{-1})+1},
\]
thereby concluding the proof.
\end{proof}

We conclude this section with a proof of Theorem~\ref{thm:palindromicity} regarding the palindromicity of the polynomials $E^*(\hat{B}_{k,n})$. For $v, w \in \mathbb{R}^n$, let $\langle v, w \rangle$ denote the standard inner product.

\begin{defn}
If a lattice polytope $P$ contains the origin in its interior, the \emph{dual polytope} is defined to be
\[
P^* :=  \{ v \in \mathbb{R}^n \; | \; \langle v, w \rangle \geq -1 \text{ for all } w\in P \}.
\]
We say that $P$ is {\em reflexive} if $P^*$ is also a lattice polytope.
\end{defn}

Hibi's palindromic theorem~\cite{H92} states that when a lattice polytope $P$ contains an integral interior point $v$, the polynomial $E^*(P,z)$ has palindromic coefficients if and only if $P-v$ is reflexive. But in the present case the polytopes $\hat{B}_{k,n}$ do not contain any integral interior point, as their vertices are vectors only containing $0$'s and $1$'s. However, we can resort to the more general theory of Gorenstein polytopes (see e.g.~\cite{BN08}). A lattice polytope $P$ is said to be {\em Gorenstein} if there exists an integer $t\geq1$ and an integer vector $v$, such that $t\cdot P - v$ is reflexive. It is known that a lattice polytope $P$ is Gorenstein if and only if $E^*(P,z)$ is palindromic~\cite{BN08}.

\begin{proof}[Proof of Theorem~\ref{thm:palindromicity}]
Fix $1\leq k\leq n$ and denote by $P_{k,n}$ the polytope $(k+1)\cdot \hat{B}_{k,n} - (1,\dots,1)$. The elements $v\in P_{k,n}$ are characterized by the inequalities
\begin{align}
   v_i &\geq -1 \quad \text{ for } 1\leq i \leq n, \label{eq:dilate1} \\
   v_{i+1}+\dots+v_{i+k} &\leq 1  \quad \text{ for } 0\leq i \leq n-k. \label{eq:dilate2}
\end{align}
Clearly $P_{k,n}$ is a lattice polytope containing the origin as an interior point. Furthermore, since there are $1$'s and $-1$'s on the right-hand sides of inequalities~\eqref{eq:dilate1} and~\eqref{eq:dilate2}, the coefficients on the left-hand sides of~\eqref{eq:dilate1} and~\eqref{eq:dilate2} are the coordinates of the vertices of $P_{k,n}^*$. Hence $P_{k,n}^*$ is a lattice polytope. So $P_{k,n}$ is reflexive and $\hat{B}_{k,n}$ is Gorenstein, thus $E^*(\hat{B}_{k,n})$ is palindromic.
\end{proof}

\begin{rem}
The proof of Theorem~\ref{thm:palindromicity} we provide is based on purely geometric considerations involving $\hat{B}_{k,n}$. It would be interesting to understand this palindromicity result on the combinatorial level, by finding an involution on $\hat{A}_{k,n}$ sending an element $\pi$ to an element $\pi'$ such that $\des(\pi)+\des(\pi')=\deg E^*(\hat{B}_{k,n})$.
\end{rem}

\begin{figure}[h!tp] \centering  \scriptsize
\begin{sideways}
\begin{tabular}{|c|c|c|c|c|c|c|}
\hline
   \diagbox{$k$}{$n-k$}  & 0    & 1            & 2                 & 3                     & 4                       & 5                               \\ \hline
  1                   & $1$   & $z+1$        & $z^2+4z+1$        & $z^3+11z^2+11z+1$     & $z^4+26z^3+66z^2+26z+1$ & $z^5+57z^4+302z^3+302z^2+57z+1$  \\ \hline
  2                   & $1$   & $z+1$        & $z^2+3z+1$        & $z^3 + 7z^2 + 7z + 1$ & $z^4+14z^3+31z^2+14z+1$ & $z^5+26z^4+109z^3+109z^2+26z+1$   \\ \hline
  3                   & $1$   & $z+1$        & $z^2+3z+1$        & $z^3 + 6z^2 + 6z + 1$ & $z^4+14z^3+31z^2+14z+1$ & $z^5+19z^4+71z^3+71z^2+19z+1$      \\ \hline
  4                   & $1$   & $z+1$        & $z^2+3z+1$        & $z^3 + 6z^2 + 6z + 1$ & $z^4+10z^3+20z^2+10z+1$ & $z^5+16z^4+56z^3+56z^2+16z+1$       \\ \hline  
  5                   & $1$   & $z+1$        & $z^2+3z+1$        & $z^3 + 6z^2 + 6z + 1$ & $z^4+10z^3+20z^2+10z+1$ & $z^5+15z^4+50z^3+50z^2+15z+1$        \\ \hline
  6                   & $1$   & $z+1$        & $z^2+3z+1$        & $z^3 + 6z^2 + 6z + 1$ & $z^4+10z^3+20z^2+10z+1$ & $z^5+15z^4+50z^3+50z^2+15z+1$         \\ \hline
  7                   & $1$   & $z+1$        & $z^2+3z+1$        & $z^3 + 6z^2 + 6z + 1$ & $z^4+10z^3+20z^2+10z+1$ & $z^5+15z^4+50z^3+50z^2+15z+1$        \\ \hline
\end{tabular}
\end{sideways}
\hspace{1cm}
\begin{sideways}
\begin{tabular}{|c|c|c|}
\hline
  \diagbox{$k$}{$n-k$}  & 6                                             & 7   \\ \hline
  1                   & $z^6+120z^5+1191z^4+2416z^3+1191z^2+120z+1$   & $z^7+247z^6+4293z^5+15619z^4+15619z^3+4293z^2+247z+1$    \\ \hline
  2                   & $z^6+46z^5+334z^4+623z^3+334z^2+46z+1$        & $z^7+79z^6+937z^5+2951z^4+2951z^3+937z^2+79z+1$      \\ \hline
  3                   & $z^6+31z^5+191z^4+340z^3+191z^2+31z+1$        & $z^7+49z^6+472z^5+1365z^4+1365z^3+472z^2+49z+1$   \\ \hline
  4                   & $z^6+25z^5+140z^4+242z^3+140z^2+25z+1$        & $z^7+38z^6+322z^5+881z^4+881z^3+322z^2+38z+1$        \\ \hline  
  5                   & $z^6+22z^5+115z^4+195z^3+115z^2+22z+1$        & $z^7+32z^6+249z^5+656z^4+656z^3+249z^2+32z+1$         \\ \hline
  6                   & $z^6+21z^5+105z^4+175z^3+105z^2+21z+1$        & $z^7+29z^6+211z^5+540z^4+540z^3+211z^2+29z+1$        \\ \hline
  7                   & $z^6+21z^5+105z^4+175z^3+105z^2+21z+1$        & $z^7+28z^6+196z^5+490z^4+490z^3+196z^2+28z+1$          \\ \hline
  8                   & $z^6+21z^5+105z^4+175z^3+105z^2+21z+1$        & $z^7+28z^6+196z^5+490z^4+490z^3+196z^2+28z+1$          \\ \hline
  9                   & $z^6+21z^5+105z^4+175z^3+105z^2+21z+1$        & $z^7+28z^6+196z^5+490z^4+490z^3+196z^2+28z+1$          \\ \hline
\end{tabular}
\end{sideways}
\caption{Some values of the $h^*$-polynomial of $\hat{B}_{k,n}$. This illustrates in particular the stabilization property: the sequence of polynomials in a particular column is stationary.}
\label{fig:numericaldata}
\end{figure}

\section{Stabilization to Narayana polynomials}
\label{sec:Narayana}

In this section, we first prove Theorem~\ref{thm:Narayana} about the stabilization of the Ehrhart $h^*$-polynomials of $\hat{B}_{k,n}$ to the Narayana polynomials, as illustrated in Figure~\ref{fig:numericaldata}. This is done combinatorially, using the connection with $\hat{A}_{k,n}$. Then we provide some geometric insight as to why the $h^*$-polynomials of $\hat{B}_{k,n}$ stabilize.

\begin{proof}[Proof of Theorem~\ref{thm:Narayana}]
We will first prove the result for $\hat{A}_{n,2n}$ via a bijective correspondence with nondecreasing parking functions. An $(n+1)$-tuple of nonnegative integers $(p_0,\ldots,p_n)$ is called a \emph{nondecreasing parking function} if the following two conditions hold:
\begin{enumerate}
 \item for $0 \leq i\leq n-1$, we have $p_i \leq p_{i+1}$;
 \item for $0 \leq i \leq n$, we have $0 \leq p_i \leq i$.
\end{enumerate}
We denote by $\calP_n$ the set of all $(n+1)$-tuples that are nondecreasing parking functions. It is well-known that the cardinality of $\calP_n$ is the $(n+1)$st Catalan number; see \cite[Exercise 6.19(s)]{stanley-ec2}, for example. A nondecreasing parking function $(p_0,\ldots,p_n)$ is said to have an \emph{ascent} at position $0\leq i\leq n-1$ if $p_i < p_{i+1}$. It follows from~\cite[Corollary A.3]{schumacher-2009} that the number of nondecreasing parking functions in $\calP_n$ with $k$ ascents is the Narayana number $N(n+1,k+1)$. To complete the proof in the case of $\hat{A}_{n,2n}$, we will define a bijection $H_n$ from $\hat{A}_{n,2n}$ to $\calP_n$ such that the number of descents of $\pi \in \hat{A}_{n,2n}$ equals the number of ascents of $H_n(\pi)$.

We term the numbers in $\{0,\ldots,n\}$ as ``small numbers''. Given $\pi \in \hat{A}_{n,2n}$, we set $H_n(\pi)$ to be the $(n+1)$-tuple $(p_0,\ldots,p_n)$ defined as follows. For $0 \leq i \leq n$, $p_i$ is defined to be the first small number to the right of $n+i$ in the word $\underline{\pi}$ if there exists a small number to the right of $n+i$. Otherwise $p_i$ is defined to be $0$. In other words, visualizing $\pi$ as the placement of the numbers from $0$ to $2n$ on a circle, $p_i$ is the next small number after $n+i$, turning in the clockwise direction. For example, if $n=3$ and $\underline{\pi}=(0,4,5,1,2,6,3)$, then $H_n(\pi)=(0,1,1,3)$.

For $\pi\in \hat{A}_{n,2n}$, we first show that $H_n(\pi)$ is in $\calP_n$. To begin with, note that the entries $0,1,\dots,n$ appear in that order in $\underline{\pi}$, so $n$ is the rightmost number in $\{1,\ldots,n\}$ to appear. If $0 \leq i \leq n$, since $(i,i+1,\ldots,n+i)$ forms a chain in $\pi$, then $n+i$ must lie either to the right of $n$ or to the left of $i$ in $\underline{\pi}$, hence $p_i\leq i$. Let $0 \leq i \leq n-1$. If $n+i$ lies to the right of $n$ in $\underline{\pi}$, then $p_i=0$ and $p_i \leq p_{i+1}$. Otherwise, $n+i$ lies to the left of $i$ and from the fact that $(i+1,\ldots,n+i,n+i+1)$ forms a chain in $\pi$, we deduce that $n+i+1$ lies between $n+i$ and $i+1$. Thus $p_i \leq p_{i+1}$ again. This concludes the proof that $H_n(\pi)$ is in $\calP_n$.

Next, given $\underline{p}=(p_0,\ldots,p_n) \in \calP_n$, we define a total cyclic order $H'_n(\underline{p})\in \calZ_{2n}$ as follows. First we place all the small numbers on the circle in such a way that $(0,1,\ldots,n)$ form a chain. Then we place each number $n+i$ for $1 \leq i\leq n$ in the cyclic interval from $p_i-1$ to $p_i$ if $p_i\geq 1$ and in the cyclic interval from $n$ to $0$ if $p_i=0$. This determines for $1 \leq i \leq n$ the position of each number $n+i$ with respect to the small numbers. Since the sequence $(p_0,\ldots,p_n)$ is weakly increasing, it is possible to arrange the numbers $n+i$ for $1 \leq i\leq n$ in such a way that $(n,n+1,\ldots,2n)$ forms a chain. This determines uniquely the position of all the numbers on the circle and yields (by definition) $H'_n(\underline{p})$.

Now we need to check that $\pi:=H'_n(\underline{p}) \in \hat{A}_{n,2n}$. By construction, we already have that $(i,i+1,\ldots,n+i)$ are $\pi$-chains when $i=0$ and $i=n$. Fix $1 \leq i\leq n-1$. Then $(i,i+1,\ldots,n)$ (resp. $(n,n+1,\ldots,n+i)$) forms a $\pi$-chain, as a subchain of $(0,1,\ldots,n)$ (resp. $(n,n+1,\ldots,2n)$). In the case when $p_i=0$ then for $1 \leq j \leq i$ we also have $p_j=0$, so all the numbers $n+1, n+2,\ldots n+i$ are to the right of $n$ in $\underline{\pi}$ and in this order. Hence $(i,i+1,\ldots,n+i)$ is a $\pi$-chain in this case. In the case when $p_i>0$, then $ p_0 \leq p_1 \leq \cdots \leq p_i \leq i$ and all the numbers $n+1, n+2,\ldots n+i$ are to the left of $i$ in $\underline{\pi}$ and in this order. Hence $(i,i+1,\ldots,n+i)$ is a $\pi$-chain in this case too. This concludes the proof that $H'_n(\underline{p})$ belongs to $\hat{A}_{n,2n}$. Clearly, $H_n$ and $H'_n$ are inverses to each other, so $H_n$ is a bijection from $\hat{A}_{n,2n}$ to $\calP_n$.

Let $\pi \in \hat{A}_{n,2n}$ and write $\underline{p}=H_n(\pi)$. Then for $0\leq i\leq n-1$, the two numbers $n+i$ and $n+i+1$ are consecutive in $\underline{\pi}$ if and only if $p_i=p_{i+1}$. The fact that descents in $\underline{\pi}$ are in one-to-one correspondence with ascents in $H_n(\pi)$ follows from the observation that a descent in $\underline{\pi}$ is always from a number larger than $n$ to a small number. This concludes the proof of Theorem~\ref{thm:Narayana} for $\hat{A}_{n,2n}$.

As for $\hat{A}_{k,n}$ where $k$ is bigger that $n/2$, the cycle conditions ensure that $n-k, \dots,k$ are consecutive elements in the list (the statement is nontrivial if $n-k<k$, i.e. $n < 2k$). For $\pi \in \hat{A}_{k,n}$, we obtain an element $\pi' \in \hat{A}_{n-k,2n-2k}$ by replacing these consecutive entries by the number $n-k$ and by replacing each entry $i$ with $k+1 \leq i \leq n$ by the number $i+n-2k$. It is not difficult to see that this is a bijection which preserves the number of descents. Finally we note that $\pi' \in \hat{A}_{m,2m}$, where $m = n-k$, and we appeal to the first part of the proof.
\end{proof}

We point out that it is possible to prove Theorem~\ref{thm:Narayana} differently, by noting that, although the general polytopes $\hat{B}_{k,n}$ do not arise as chain polytopes~\cite{S86}, for $k \leq n \leq 2k$, the polytopes $\hat{B}_{k,n}$ do arise as chain polytopes for posets associated with skew Young diagrams. Associating a Dyck path to every linear extension of such a poset yields another proof of Theorem~\ref{thm:Narayana}.

It is worth observing that one can also see the stabilization property of the $h^*$-polynomials geometrically.

\begin{lem}
If $k > \frac {n-1}2$, we have $E^*(\hat{B}_{k,n},z) = E^*(\hat{B}_{k+1,n+1},z)$.
\end{lem}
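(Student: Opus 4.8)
The plan is to exhibit $\hat{B}_{k+1,n+1}$ as a height-one lattice pyramid over a lattice polytope unimodularly equivalent to $\hat{B}_{k,n}$, and then to invoke the standard fact that forming such a pyramid leaves the $h^*$-polynomial unchanged. Write $W_i:=\{i+1,\dots,i+k+1\}$ for $0\le i\le n-k$, so that $\hat{B}_{k+1,n+1}$ is the set of $(x_1,\dots,x_{n+1})\in[0,1]^{n+1}$ with $\sum_{\ell\in W_i}x_\ell\le 1$ for all $i$. First I would record the two elementary facts $\bigcup_i W_i=\{1,\dots,n+1\}$ and $\bigcap_i W_i=\{n-k+1,\dots,k+1\}$, and observe that the hypothesis $k>\tfrac{n-1}{2}$ is exactly the statement that this ``core'' interval is nonempty (equivalently $n\le 2k$); this is where the hypothesis enters.

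Next I would fix any index $m$ in the core $\{n-k+1,\dots,k+1\}$ and consider the subset $F:=\hat{B}_{k+1,n+1}\cap\{x_m=0\}$ together with the point $e_m$. Since $m$ lies in every window $W_i$, the point $e_m$ satisfies every defining inequality with equality, so $e_m\in\hat{B}_{k+1,n+1}$, and its lattice distance to the hyperplane $\{x_m=0\}$ is $1$. Deleting the coordinate $x_m$ gives a unimodular identification of $\{x_m=0\}$ with $\R^{n}$; under it each window $W_i$ loses exactly its entry $m$ and, after relabeling, becomes the block of $k$ consecutive coordinates $\{i+1,\dots,i+k\}$, so a short bookkeeping check shows that the $n-k+1$ inequalities $\sum_{\ell\in W_i}x_\ell\le1$ together with the surviving cube inequalities are precisely the defining inequalities of $\hat{B}_{k,n}$. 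Hence $F\cong\hat{B}_{k,n}$ as lattice polytopes.

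The remaining and most delicate step is to prove the pyramid identity $\hat{B}_{k+1,n+1}=\operatorname{conv}\bigl(F\cup\{e_m\}\bigr)$. The inclusion $\supseteq$ is immediate from convexity. For $\subseteq$, I would take $x\in\hat{B}_{k+1,n+1}$; if $x_m=1$ then every window sum is forced to equal $1$, so all coordinates outside $m$ vanish and $x=e_m$, while if $x_m<1$ I would write $x=x_m e_m+(1-x_m)y$ with $y_m=0$ and $y_\ell=x_\ell/(1-x_m)$ for $\ell\ne m$, note that the $W_i$-sum of $y$ is $\bigl(\sum_{\ell\in W_i}x_\ell-x_m\bigr)/(1-x_m)\le 1$, and then the only point left to check is that $y_\ell\le 1$, i.e.\ $x_\ell+x_m\le 1$, for each $\ell\ne m$. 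This is the one genuinely arithmetic point, and the one I expect to be the main obstacle: because $m\in\{n-k+1,\dots,k+1\}$ and $\ell\in\{1,\dots,n+1\}$ one has $|\ell-m|\le k$, hence $\ell$ and $m$ lie in a common window $W_i$ and $x_\ell+x_m\le\sum_{\ell'\in W_i}x_{\ell'}\le 1$. Everything else is routine manipulation of consecutive windows.

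Granting the pyramid identity, I would conclude by quoting the standard fact (see e.g.\ \cite{BR15}) that if $Q=\operatorname{conv}(P\cup\{v\})$ is a lattice pyramid whose apex $v$ lies at lattice distance $1$ from the affine hull of the lattice polytope $P$, then $E(Q,t)=\sum_{s=0}^{t}E(P,s)$ and hence $E^*(Q,z)=E^*(P,z)$; applying this with $Q=\hat{B}_{k+1,n+1}$ and $P\cong\hat{B}_{k,n}$ gives $E^*(\hat{B}_{k+1,n+1},z)=E^*(\hat{B}_{k,n},z)$, which is the assertion. (One could also deduce the lemma immediately from Theorem~\ref{thm:Narayana}, since $k>\tfrac{n-1}{2}$ forces both $k\le n\le 2k$ and $k+1\le n+1\le 2(k+1)$, so both sides equal $Q_{n-k+1}(z)$; but the pyramid argument is the geometric explanation of stabilization that motivates this section.)
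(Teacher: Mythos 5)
Your proof is correct and is essentially the same as the paper's: the paper likewise singles out a coordinate lying in every window (an $\ell$ with $k\geq\ell\geq n-k$), splits it off via the bijection $\alpha(v)=(v_\ell,(v_1,\dots,\widehat{v_\ell},\dots,v_{n+1}))$ to obtain $E(\hat{B}_{k+1,n+1},t)=\sum_{u=0}^{t}E(\hat{B}_{k,n},u)$, and remarks at the end that this exhibits $\hat{B}_{k+1,n+1}$ as a cone (pyramid) over $\hat{B}_{k,n}$. You establish the pyramid identity at the level of convex sets and then invoke the standard invariance of $h^*$ under lattice pyramids, while the paper verifies the equivalent lattice-point recursion directly; the underlying mechanism (a coordinate in the common core of all windows) is identical.
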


\begin{proof}
First note that $ k > \frac {n-1}2 $ is equivalent to $k\geq n-k$, and so there exists an $\ell$ such that $k \geq \ell \geq n-k$. Consider the map $\alpha:\mathbb{Z}^{n+1} \to \mathbb{Z} \times \mathbb{Z}^{n}$ defined by 
\[
  \alpha(v_1,\dots,v_{n+1}) = ( v_\ell , (v_1,\dots,\widehat{v_\ell},\dots, v_{n+1}) ),
\]
where $\widehat{v_\ell}$ means that $v_\ell$ is omitted in the sequence. It is clearly bijective. We claim that for any integer $t\geq0$, we have
\begin{equation}
\label{decomp_tB}
\alpha\big( (t\cdot \hat{B}_{k+1,n+1}) \cap \mathbb{Z}^{n+1} \big) = \bigsqcup_{u=0}^t \{t-u\}\times \big( (u\cdot \hat{B}_{k,n}) \cap \mathbb{Z}^n \big).
\end{equation}
To see this, let $ v = (v_i)_{1\leq i \leq n} \in \mathbb{Z}^{n+1}$. By definition of $\hat{B}_{k+1,n+1}$, we have $v\in t\cdot \hat{B}_{k+1,n+1}$ if and only if $v_i\geq 0$ and
\begin{equation}
\label{defining_eq_tB}
v_{i+1} + \dots + v_{i+k+1} \leq t, \quad \text{ for all } 0\leq i\leq n-k.
\end{equation}
Then, note that $k\geq \ell \geq n-k$ ensures that $v_\ell$ appears in all the sums in~\eqref{defining_eq_tB}. So the condition~\eqref{defining_eq_tB} is equivalent to
\begin{equation}
\label{defining_eq_tB2}
v_{i+1} + \dots + \widehat{v_\ell} + \dots + v_{i+k+1} \leq t-v_\ell, \quad \text{ for all } 0\leq i\leq n-k.
\end{equation}
These equations precisely say that $ (v_1,\dots,\widehat{v_\ell},\dots, v_{n+1}) \in (t-v_\ell) \cdot \hat{B}_{k,n}$, (knowing that $v_i\geq 0$). Thus we get~\eqref{decomp_tB}. Therefore, we have
\[
   E( \hat{B}_{k+1,n+1} , t ) = \sum_{u=0}^t E( \hat{B}_{k,n} , u ). 
\]
By summing, we get
\[
  \sum_{t=0}^\infty E( \hat{B}_{k+1,n+1} , t ) z^t =  \sum_{t=0}^\infty
  \sum_{u=0}^t E( \hat{B}_{k,n} , u ) z^t = \sum_{u = 0}^\infty E( \hat{B}_{k,n} , u ) \frac{z^{u}}{1-z}.
\]
After multiplying by $(1-z)^{n+1}$, we get $E^*(\hat{B}_{k,n},z) = E^*(\hat{B}_{k+1,n+1},z)$.
\end{proof}

Note that in Equation~\eqref{decomp_tB}, $u$ ranges among the integers between $0$ and $t$.  But the argument in the proof also shows that
\[
  \alpha( \hat{B}_{k+1,n+1} )  =  \bigsqcup_{0\leq x \leq 1} \{1-x\} \times \big( x\cdot \hat{B}_{k,n} \big),
\]
where $x$ runs through the real numbers between $0$ and $1$. This precisely says that $\hat{B}_{k+1,n+1}$ is a cone over $\hat{B}_{k,n}$.

\section{\texorpdfstring{Enumerating $\hat{A}_{k,n}$}{Enumerating A kn }}
\label{sec:boustrophedon}

In this section we show how to use the multidimensional boustrophedon construction introduced in~\cite{R17} to compute the cardinalities of $\hat{A}_{k,n}$, which by Theorem~\ref{thm:Stanleyvolumes} are equal to the normalized volumes of $\hat{B}_{k,n}$.

For every total cyclic order $\pi\in\calZ_n$ and for every two elements $i\neq j$ of $[n]$, define the length of the arc from $i$ to $j$ in $\pi$ to be
\begin{equation}
L_\pi(i,j):=1 + \#\left\{h\in [n] | (i,h,j)\in \pi \right\}.
\end{equation}

\begin{ex}
Take $n=6$ and take the cyclic order $\pi$ associated with the cyclic permutation $0 3 5 1 6 2 4$. Then
\begin{align*}
L_\pi(3,5) &= 1;\\
L_\pi(3,2) &= 4;\\
L_\pi(2,3) &=3.
\end{align*}
\end{ex}

For $d\geq1$ and $N\geq d+1$, define the simplex of dimension $d$ and order $N$ to be
\begin{equation}
T_N^d:=\left\{(i_1,\ldots,i_{d+1})\in\N^{d+1} \; | \; i_1+\cdots + i_{d+1}=N \right\}.
\end{equation}
When $d=1$, $T_N^1$ is a row of $N-1$ elements. When $d=2$ (resp. $d=3$), $T_N^d$ is a triangle (resp. tetrahedron) of side length $N-2$ (resp. $N-3$). In general, $T_N^d$ is a $d$-dimensional simplex of side length $N-d$.

For $2 \leq k \leq n$ and $\underline{i}=(i_1,\ldots,i_k)\in T_{n+1}^{k-1}$, we define $\check{A}_{\underline{i}}$ to be the set of all $\pi\in \hat{A}_{k,n}$ such that the following conditions hold:
\begin{itemize}
 \item for $1 \leq j \leq k-1$, we have $L_\pi(n+j-k,n+1+j-k)=i_j$;
 \item $L_\pi(n,n+1-k)=i_k$.
\end{itemize}
It is not hard to see that
\begin{equation}
\hat{A}_{k,n}=\bigsqcup_{\underline{i}\in T_{n+1}^{k-1}} \check{A}_{\underline{i}}.
\end{equation}

Define $a_{\underline{i}}:=\# \check{A}_{\underline{i}}$. We will provide linear recurrence relations for the $(a_{\underline{i}})_{\underline{i}\in T_{n+1}^{k-1}}$, which are arrays of numbers indexed by some $T_N^d$. We first need to define linear operators $\Psi$ and $\Omega$ which transform one array of numbers indexed by some $T_N^d$ into another array of numbers, indexed by $T_{N+1}^d$ in the case of $\Psi$ and by $T_N^d$ in the case of $\Omega$.

We define the map $\tau$ which to an element $\underline{i}=(i_1,\ldots,i_{d+1})\in T_{N+1}^d$ associates the subset of all the $\underline{i'}=(i'_1,\ldots,i'_{d+1})\in T_N^d$ such that the following conditions hold:
\begin{itemize}
 \item $1 \leq i'_1 \leq i_1-1$;
 \item $i'_{d+1}=i_{d+1}+i_1-i'_1-1$;
 \item $i'_j=i_j$ for $2 \leq j \leq d$.
\end{itemize}

\begin{ex}
Case $d=2$, $N=6$. Then
\begin{align}
\tau(1,2,4)&=\emptyset \\
\tau(4,2,1) &= \left\{(1,2,3),(2,2,2),(3,2,1) \right\}.
\end{align}
\end{ex}

Define the map $\Psi$ which sends an array of numbers $(b_{\underline{i}})_{\underline{i}\in T_N^d}$ to the array of numbers $(c_{\underline{i}})_{\underline{i}\in T_{N+1}^d}$, where for $\underline{i}\in T_{N+1}^d$, we have
\begin{equation}
c_{\underline{i}}:=\sum_{\underline{i'}\in\tau(\underline{i})} b_{\underline{i'}}.
\end{equation}

\begin{figure}[htbp]
\includegraphics[height=1.5in]{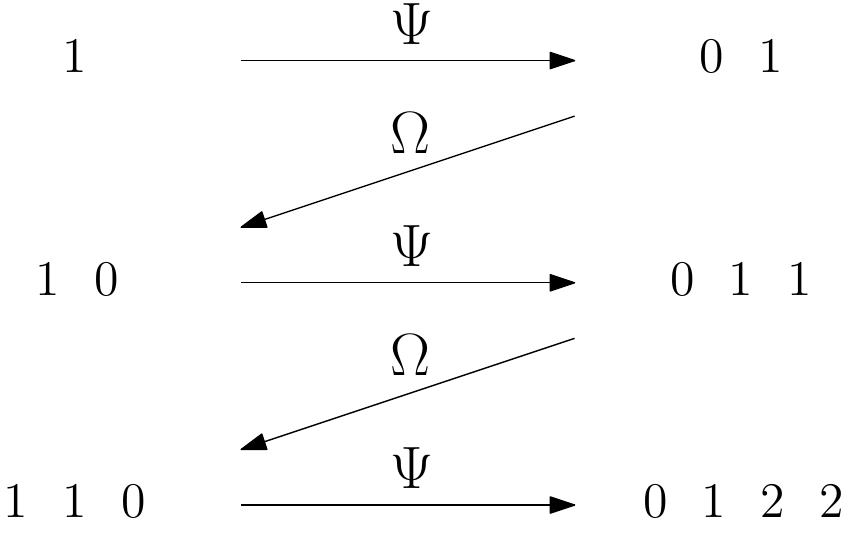}
\caption{The first four lines of the classical boustrophedon, obtained as the case $k=2$ of the multidimensional boustrophedon. Here the operator $\Psi$ corresponds to taking partial sums (starting from the empty partial sum) and the operator $\Omega$ returns the mirror image.}
\label{fig:boustrophedondim1}
\end{figure}

\begin{figure}[htbp]
\includegraphics[height=5in]{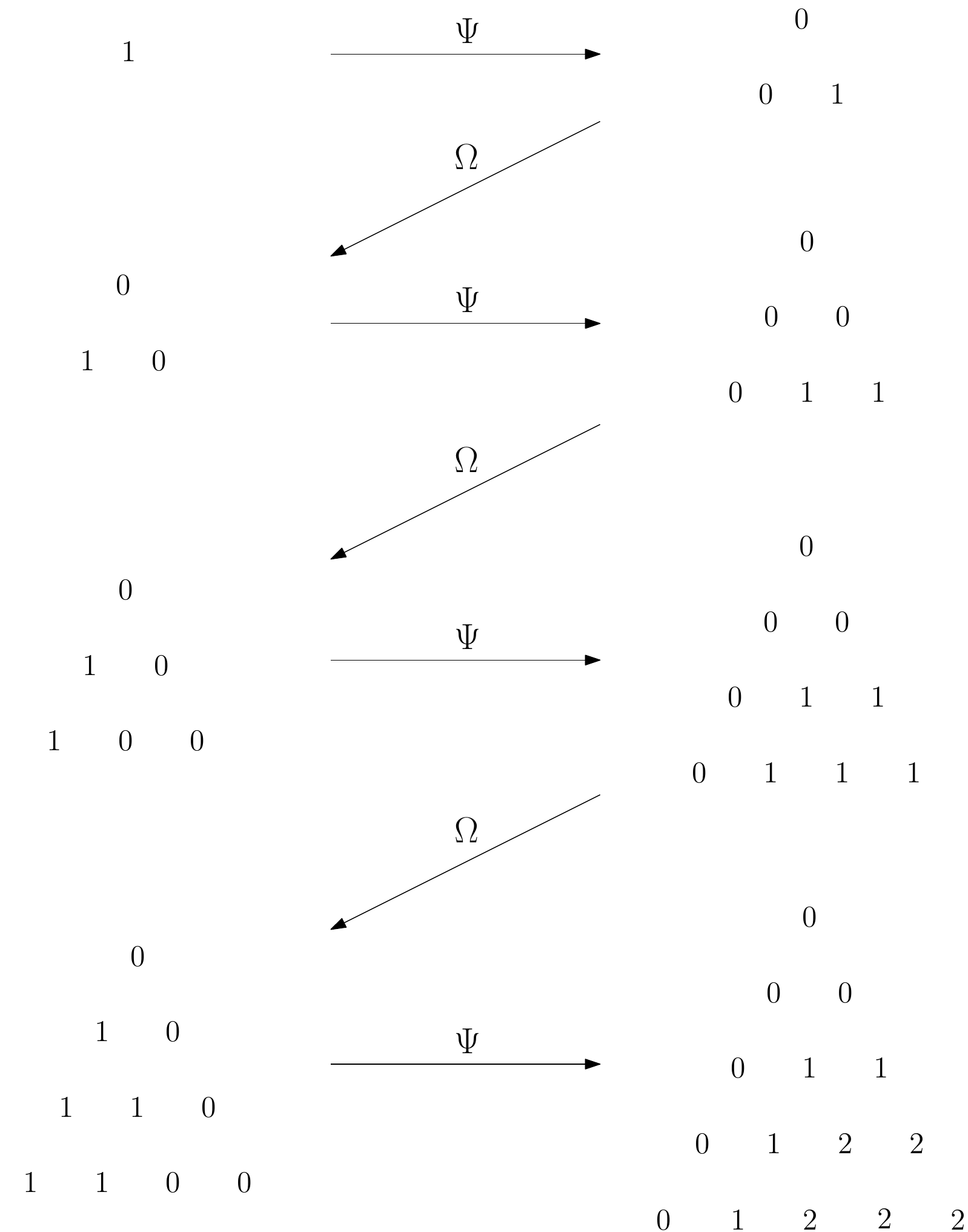}
\caption{The first five triangles of numbers of the case $k=3$ of the multidimensional boustrophedon. Here the operator $\Psi$ takes the partial sums of each row (starting with the empty partial sum) and adds a $0$ at the tip of the triangle, while the operator $\Omega$ rotates the triangle by $120$ degrees clockwise.}
\label{fig:boustrophedondim2}
\end{figure}

\begin{rem}
In the language of~\cite{R17}, the map $\Psi$ would be called $\Phi_{1,1,d+1}$. Note that the $\Phi$ operators of~\cite{R17} were acting on generating functions, while the $\Psi$ operator here acts on arrays of numbers. These two points of view are essentially the same, since there is a natural correspondence between arrays of numbers and their generating functions.
\end{rem}

We also introduce the operator $\Omega$, which sends an array of numbers $(b_{\underline{i}})_{\underline{i}\in T_N^d}$ to the array of numbers $(c_{\underline{i}})_{\underline{i}\in T_N^d}$, where for $\underline{i}\in T_N^d$, we have
\begin{equation}
c_{(i_1,\ldots ,i_{d+1})}:=b_{(i_{d+1},i_1,i_2,\ldots,i_d)}.
\end{equation}
The operator $\Omega$ acts by cyclically permuting the indices.

We can now state the following recurrence relation for the arrays of numbers $(a_{\underline{i}})_{\underline{i}\in T_{n+1}^{k-1}}$:

\begin{thm}
For $n\geq3$ and $2 \leq k \leq n-1$, we have
\begin{equation}
\label{eq:boustrophedon}
(a_{\underline{i}})_{\underline{i}\in T_{n+1}^{k-1}}= \Omega \circ \Psi (a_{\underline{i}})_{\underline{i}\in T_n^{k-1}}.
\end{equation}
\end{thm}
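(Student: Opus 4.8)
The plan is to set up a bijection between $\check{A}_{\underline{i}}$ for $\underline{i} \in T_{n+1}^{k-1}$ and certain disjoint pieces of the $\check{A}_{\underline{i'}}$ for $\underline{i'} \in T_n^{k-1}$, which realizes the composite operator $\Omega \circ \Psi$ at the level of cyclic orders rather than arrays. The key observation is that an element $\pi \in \hat{A}_{k,n}$ restricts to an element of $\hat{A}_{k,n-1}$ upon deleting the largest element $n$ from the circle: the chain conditions $(i,i+1,\ldots,i+k)$ for $0 \le i \le n-1-k$ survive deletion unchanged, since none of them involves $n$, and the one remaining condition $(n-k,n-k+1,\ldots,n)$ becomes the condition $(n-1-k+1,\ldots,n-1)$ after relabeling? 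No: more carefully, deleting $n$ from $\pi \in \hat{A}_{k,n}$ yields a cyclic order on $[n-1]$ that still satisfies all chain conditions of $\hat{A}_{k,n-1}$. So there is a forgetful map $\hat{A}_{k,n} \to \hat{A}_{k,n-1}$, and the content of the theorem is to track how this map interacts with the arc-length statistics defining the $\check{A}$'s and to count fibers.

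First I would fix $\underline{i} = (i_1,\ldots,i_k) \in T_{n+1}^{k-1}$ and describe explicitly, for $\pi \in \check{A}_{\underline{i}}$, where the element $n$ sits relative to its neighbors. The defining conditions say $L_\pi(n+j-k, n+1+j-k) = i_j$ for $1 \le j \le k-1$ and $L_\pi(n, n+1-k) = i_k$. Deleting $n$ produces $\pi' \in \hat{A}_{k,n-1}$; I would compute the arc lengths of $\pi'$ in terms of those of $\pi$. The arc from $n-1-k$ to $n-k$ (in the $n-1$ setting these are the relevant "new" boundary elements) picks up whatever elements lay between $n-1$ and $n$ in $\pi$, so its length changes in a controlled way governed by where $n$ was inserted into the arc structure; the cyclic permutation of indices in $\Omega$ should come from the fact that inserting $n$ and then reading off the new tuple $(i_1,\ldots,i_k)$ cyclically shifts which arc plays the role of the "wrap-around" arc $L_\pi(n, n+1-k)$. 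Concretely, I expect: given $\pi' \in \check{A}_{\underline{i'}}$ with $\underline{i'} = (i'_1,\ldots,i'_k) \in T_n^{k-1}$, the number of ways to insert $n$ so that the result lies in $\hat{A}_{k,n}$ and has arc-vector $\underline{i} \in T_{n+1}^{k-1}$ is $1$ precisely when $\underline{i'} \in \tau(\Omega^{-1}\underline{i})$ (or some such indexing), and the summation in $\Psi$ counts exactly the valid insertion positions along the relevant arc, which ranges over $1 \le i'_1 \le i_1 - 1$ as in the definition of $\tau$.

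The hard part will be getting the precise bookkeeping of the arc lengths right — in particular verifying that inserting $n$ into $\pi'$ subject to the chain condition $(n-k,n-k+1,\ldots,n)$ forces $n$ to lie on a specific arc (the arc from $n-k$ going clockwise, bounded by where the chain $(n-k,\ldots,n-1)$ already sits), that each admissible insertion slot corresponds bijectively to one value of $i'_1$ in the range dictated by $\tau$, and that the remaining coordinates $i'_2,\ldots,i'_{k-1}$ are unchanged while the "wrap" coordinate transforms as $i'_{d+1} = i_{d+1} + i_1 - i'_1 - 1$ with $d = k-1$. I would check this first in the cases $k=2$ (the classical boustrophedon, Figure~\ref{fig:boustrophedondim1}, where $\Psi$ is partial summation and $\Omega$ is reversal) and $k=3$ (Figure~\ref{fig:boustrophedondim2}) to pin down the exact correspondence, then write the general argument.

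Having established the bijection $\check{A}_{\underline{i}} \cong \bigsqcup_{\underline{i'} \in \tau(\underline{i})} \check{A}_{\Omega^{-1}\underline{i'}}$ (with indices arranged so that the composite is $\Omega \circ \Psi$ and not $\Psi \circ \Omega$), taking cardinalities gives $a_{\underline{i}} = \sum_{\underline{i'} \in \tau(\underline{i})} a_{\Omega^{-1}(\underline{i'})}$, which is exactly the claim $(a_{\underline{i}})_{\underline{i} \in T_{n+1}^{k-1}} = \Omega \circ \Psi\, (a_{\underline{i}})_{\underline{i} \in T_n^{k-1}}$, upon unwinding that $\Omega$ is an involution-like reindexing commuting appropriately with the sum. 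The only subtlety beyond the arc-length computation is making sure the base cases and the range $2 \le k \le n-1$ are handled so that $T_n^{k-1}$ is nonempty and the deletion map is well-defined, which follows since $n \ge k+1$ guarantees $n$ is not among the elements constrained to be "small" in any single chain.
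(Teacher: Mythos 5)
Your proposal follows essentially the same route as the paper: delete the largest element $n$ to get a map $\check{A}_{\underline{i}} \to \hat{A}_{k,n-1}$, observe that the fibers correspond to admissible insertion slots along one specific arc (giving the summation range $1 \leq j_1 \leq i_k-1$ of $\Psi$), and account for the cyclic relabeling of which arcs are measured via $\Omega$; the index conditions you anticipate match the paper's equations~\eqref{eq:cond1}--\eqref{eq:cond3} exactly. The arc-length bookkeeping you defer is asserted at a comparable level of detail in the paper's own proof, so the argument is sound and not a different method.
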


\begin{proof}

\begin{figure}[htbp]
\includegraphics[width=4in]{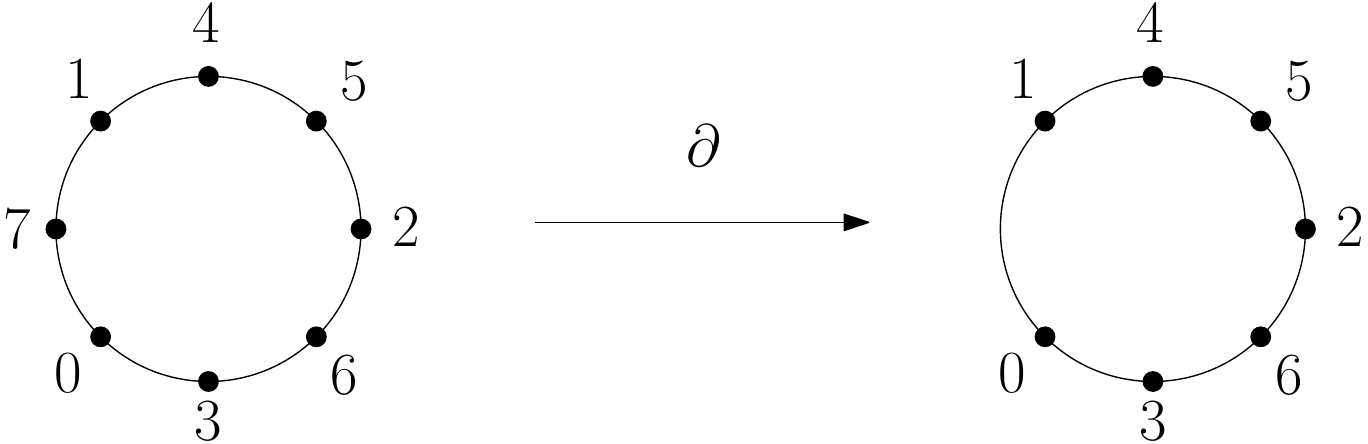}
\caption{Illustration of the action of $\partial$ on a total order on $[7]$, yielding a total order on $[6]$.}
\label{fig:deletion}
\end{figure}

Consider the map $\partial$ which to an element $\pi\in \check{A}_{\underline{i}}$ associates the element $\pi'\in \hat{A}_{k,n-1}$ obtained by deleting the number $n$ from the circle. For $\pi\in \check{A}_{\underline{i}}$ with $\underline{i}=(i_1,\ldots,i_k)$, the element $\partial(\pi)$ belongs to some $\check{A}_{\underline{j}}$, where $\underline{j}=(j_1,\ldots,j_k)$ satisfies the following conditions:
\begin{gather}
\label{eq:cond1} 1 \leq j_1 \leq i_k-1 \\
\label{eq:cond2} j_k=i_{k-1}+i_k-j_1-1 \\
\label{eq:cond3} j_m=i_{m-1} \text{ for } 2 \leq m \leq k-1. 
\end{gather}
Furthermore, the map $\partial$ is a bijection between $\check{A}_{\underline{i}}$ and $\sqcup_{\underline{j}} \check{A}_{\underline{j}}$, where the union is taken over all the multi-indices $\underline{j}$ satisfying conditions~\eqref{eq:cond1}-\eqref{eq:cond3}, because starting from any element $\pi'$ in some $\check{A}_{\underline{j}}$ with $\underline{j}$ satisfying conditions~\eqref{eq:cond1}-\eqref{eq:cond3}, there is a unique way to add back the number $n$ on the circle to obtain $\pi$ such that $L_\pi(n-1,n)=i_{k-1}$. This concludes the proof. 
\end{proof}

We can use this to compute the cardinality of any $\hat{A}_{k,n}$ inductively on $n$. We start at $n=k-1$. In this case, the simplex $T_k^{k-1}$ has a single element, and we start with the array consisting of a single entry equal to $1$. Then we apply formula~\eqref{eq:boustrophedon} to reach the desired value of $n$, and we take the sum of all the entries in the corresponding array of numbers.

\begin{rem}
In the case $k=2$, we recover the classical boustrophedon used to compute the Entringer numbers, which are the numbers $a_{\underline{i}}$ (see e.g.~\cite{R17}). The appearance of the operator $\Omega$ explains why each line is read alternatively from left to right or from right to left. See Figure~\ref{fig:boustrophedondim1} for the computation of the first four lines of the classical boustrophedon. For $k>2$, the numbers $a_{\underline{i}}$ may be seen as higher-dimensional versions of the Entringer numbers and the numbers
\begin{equation}
a_{k,n}:=\sum_{\underline{i}\in T_{n+1}^{k-1}} a_{\underline{i}}
\end{equation}
may be seen as higher-dimensional Euler numbers (where the number $k$ is the dimension parameter). See Figure~\ref{fig:boustrophedondim2} for the computation of the first five triangles of numbers in the case $k=3$.
\end{rem}

\section*{Acknowledgements}
MJV thanks Alejandro Morales for a fruitful discussion. AA was partially supported by the UGC Centre for Advanced Studies and by Department of Science and Technology grant EMR/2016/006624. 
SR was supported by the Fondation Simone et Cino del Duca. SR also acknowledges the hospitality of the Faculty of Mathematics of the Higher School of Economics in Moscow, where part of this work was done.

\bibliographystyle{alpha}
\bibliography{bibliographie}

\setlength{\parindent}{0mm}

\end{document}